\numberwithin{equation}{section}
\newtheorem{theorem}{\bf Theorem}[section]
\newtheorem{main}{\bf Theorem}
\newtheorem{lemma}[theorem]{\bf Lemma}
\newtheorem{proposition}[theorem]{\bf Proposition}
\theoremstyle{definition}
\numberwithin{equation}{section}
\let\c@theorem\c@equation
\newtheorem*{namedtheorem}{\theoremname}
\newcommand{\theoremname}{testing}
\renewcommand{\leq}{\leqslant}
\renewcommand{\geq}{\geqslant}
\newcommand{\norm}{\trianglelefteq}
\newcommand{\gen}[1]{\langle #1 \rangle}
\newcommand{\ol}{\overline}
\newcommand{\ov}{\bar}
\newcommand{\F}{\mathcal{F}}
\renewcommand{\L}{\mathcal{L}}
\renewcommand{\O}{\mathcal{O}}
\newcommand{\T}{\mathcal{T}}
\newcommand{\Z}{\mathcal{Z}}
\renewcommand{\phi}{\varphi}
\newcommand{\id}{\operatorname{id}}
\newcommand{\incl}{\operatorname{incl}}
\newcommand{\Hom}{\operatorname{Hom}}
\newcommand{\Mor}{\operatorname{Mor}}
\newcommand{\Ob}{\operatorname{Ob}}
\newcommand{\Spin}{\operatorname{Spin}}
\newcommand{\Sol}{\operatorname{Sol}}
\newcommand{\Aut}{\operatorname{Aut}}
\newcommand{\Out}{\operatorname{Out}}
\newcommand{\Outdiag}{\operatorname{Outdiag}}
\newcommand{\Inn}{\operatorname{Inn}}
\newcommand{\Syl}{\operatorname{Syl}}
\renewcommand{\hat}{\widehat}
\newcommand{\hyp}{\mathfrak{hyp}}
\newcommand{\typ}{\textup{typ}}
\title{Extensions of the Benson-Solomon fusion systems}
\author{Ellen Henke}
\email{ellen.henke@abdn.ac.uk}
\address{Institute of Mathematics \\University of Aberdeen \\ Fraser
Noble Building \\ Aberdeen AB15 5LY \\ United Kingdom} 
\author{Justin Lynd}
\email{lynd@louisiana.edu}
\address{Department of Mathematics\\University of Louisiana at Lafayette\\Maxim
Doucet Hall\\Lafayette, LA 70504}
\thanks{
Justin Lynd was partially supported by NSA Young Investigator Grant
H98230-14-1-0312.  This project has received funding from the European Union's
Horizon 2020 research and innovation programme under the Marie
Sk{\l}odowska-Curie grant agreement No. 707758.
}
\dedicatory{To Dave Benson on the occasion of his second 60th birthday}
\keywords{fusion system, linking system, Benson-Solomon fusion system, group extension}
\subjclass[2000]{Primary 20D20, Secondary 20D05}
\date{\today}
\begin{document}
\begin{abstract}
The Benson-Solomon systems comprise the one currently known family of simple
saturated fusion systems at the prime two that do not arise as the fusion
system of any finite group. We determine the automorphism groups and the
possible almost simple extensions of these systems and of their centric linking
systems.
\end{abstract}
\maketitle

\section{Introduction}\label{S:intro}

There is one known family of simple exotic fusion systems at the prime $2$, the
Benson-Solomon systems. They were first predicted by Dave Benson
\cite{Benson1998c} to exist as finite versions of a $2$-local compact group
associated to the $2$-compact group $DI(4)$ of Dwyer and Wilkerson
\cite{DwyerWilkerson1993}. They were later constructed by Levi and Oliver
\cite{LeviOliver2002} and Aschbacher and Chermak \cite{AschbacherChermak2010}.
The purpose of this paper is determine the automorphism groups of the
Benson-Solomon fusion and centric linking systems, and use that information to
determine the fusion systems whose generalized Fitting subsystem is a
Benson-Solomon system.  This information is needed within certain portions of
Aschbacher's program to classify simple fusion systems of component type at the
prime $2$.  In particular, it is needed within an involution centralizer
problem for these systems. Some results on automorphisms of these systems
appear in the standard references \cite{LeviOliver2002, LeviOliver2005,
AschbacherChermak2010}, and part of our aim is to complete the picture.  We now
summarize the main results. Slightly more detailed statements are contained in
the statements of Theorem~\ref{T:outsol} and Theorem~\ref{T:solext}. 

\begin{main}
\label{T:mainint}
Fix an odd prime power $q$, and let $l = v_2(q^2-1)-3$ where $v_2$ is the
$2$-adic valuation. Set $\F_0 = \F_{\Sol}(q)$, a Benson-Solomon fusion system
over the $2$-group $S_0$. Let $\F$ be any almost simple extension of $\F_0$,
namely, any saturated fusion system over a $2$-group $S$ such that $F^*(\F) =
\F_0$. Then 
\begin{enumerate}
\item[(a)] $\Out(\F_0) \cong  C_{2^l}$ is cyclic of order $2^l$, induced by
field automorphisms, and
\item[(b)] $\F_0 = O^2(\F)$, $S$ splits over $S_0$, and the induced map $S/S_0
\to \Out(\F_0)$ is injective. 
\end{enumerate}
Moreover, for each subgroup $A \leq \Out(\F_0)$, there is a unique almost
simple extension $\F$ of $\F_0$ as above, up to isomorphism, such that the map
$S/S_0 \to \Out(\F_0)$ has image $A$. 
\end{main}

The paper proceeds as follows. In Section~\ref{S:autfus}, we recall the various
automorphism groups of fusion and linking systems and the maps between them,
following \cite{AOV2012}. In Section~\ref{S:aut}, we look at automorphisms of
the fusion and linking systems of $\Spin_7(q)$ and of the Benson-Solomon
systems. We show in Theorem~\ref{T:outsol} that the outer automorphism group of
the latter is a cyclic group of field automorphisms of $2$-power order.
Finally, we show in Theorem~\ref{T:solext} that the systems having a
Benson-Solomon generalized Fitting subsystem are uniquely determined by the
outer automorphisms they induce on the fusion system, and that all such
extensions are split. 

All our maps are written on the left. We would like to thank Jesper Grodal, Ran
Levi, and Bob Oliver for helpful conversations. 

\section{Automorphisms of fusion and linking systems}\label{S:autfus}

We refer to \cite{AschbacherKessarOliver2011} for the definition of a saturated
fusion system, and also for the definition of a centric subgroup of a fusion
system. Let $\F$ be a saturated fusion system over the finite $p$-group $S$,
and write $\F^c$ for the collection of $\F$-centric subgroups. Whenever $g$ is
an element of a finite group, we write $c_g$ for the conjugation homomorphism
$x \mapsto { }^g\!x =  gxg^{-1}$ and its restrictions.

\subsection{Background on linking systems}
Whenever $\Delta$ is an overgroup-closed, $\F$-invariant collection of
subgroups of $S$, we have the transporter category $\T_{\Delta}(S)$ with those
objects. This is the full subcategory of the transporter category $\T_S(S)$
where the objects are subgroups of $S$, and morphisms are the transporter sets:
$N_S(P,Q) = \{s \in S \mid sPs^{-1} \leq Q\}$ with composition given by
multiplication in $S$.

A \emph{linking system} associated to $\F$ is a nonempty category $\L$ with
object set $\Delta$, together with functors
\begin{eqnarray}
\label{D:linking}
\T_{\Delta}(S) \xrightarrow{\quad\delta\quad} \L \xrightarrow{\quad\pi\quad} \F.
\end{eqnarray}
The functor $\delta$ is the identity on objects and injective on morphisms,
while $\pi$ is the inclusion on objects and surjective on morphisms. 
Write $\delta_{P,Q}$ for the corresponding injection $N_S(P,Q) \to
\Mor_\L(P,Q)$ on morphisms, write $\delta_P$ for $\delta_{P,P}$, and use
similar notation for $\pi$.

The category and its structural functors are subject to several axioms which
may be found in \cite[Definition~II.4.1]{AschbacherKessarOliver2011}. In
particular, Axiom (B) states that for all objects $P$ and $Q$ in $\L$ and each
$g \in N_S(P,Q)$, we have $\pi_{P,Q}(\delta_{P,Q}(g)) = c_g \in \Hom_\F(P,Q)$.
A \emph{centric linking system} is a linking system with $\Delta = \F^c$. Given
a finite group $G$ with Sylow $p$-subgroup $S$, the canonical centric linking
system for $G$ is the category $\L^c_S(G)$ with objects the $p$-centric
subgroups $P \leq S$ (namely those $P$ whose centralizer satisfies $C_G(P) =
Z(P) \times O_{p'}(C_G(P))$), and with morphisms the orbits of the transporter
set $N_G(P,Q) = \{g \in G \mid gPg^{-1} \leq Q\}$ under the right action of
$O_{p'}(C_G(P))$. 

\subsubsection{Distinguished subgroups and inclusion morphisms} 

The subgroups $\delta_P(P) \leq \Aut_\L(P)$ are called \emph{distinguished
subgroups}. When $P \leq Q$, the morphism $\iota_{P,Q}:=\delta_{P,Q}(1) \in
\Mor_{\L}(P,Q)$ is the \emph{inclusion} of $P$ into $Q$. 

\subsubsection{Axiom (C) for a linking system}\label{SS:axiomc} We will make
use of Axiom (C) for a linking system, which says that for each morphism $\phi
\in \Mor_{\L}(P,Q)$ and element $g \in N_S(P)$, the following identity holds
between morphisms in $\Mor_\L(P,Q)$: 
\[
\phi \circ \delta_P(g) = \delta_Q(\pi(\phi)(g)) \circ \phi.
\]

\subsubsection{Restrictions in linking systems}\label{SS:restrictions}
For each morphism $\psi\in\Mor_\L(P,Q)$, and each $P_0,Q_0\in\Ob(\L)$ such that
$P_0\leq P$, $Q_0\leq Q$, and $\pi(\psi)(P_0)\leq Q_0$, there is a unique
morphism $\psi|_{P_0,Q_0}\in \Mor_\L(P_0,Q_0)$ (the \emph{restriction} of
$\psi$) such that $\psi\circ\iota_{P_0,P} = \iota_{Q_0,Q}\circ
\psi|_{P_0,Q_0}$. See \cite[Proposition~4(b)]{Oliver2010} or
\cite[Proposition~4.3]{AschbacherKessarOliver2011}.

Note that in case $\psi = \delta_{P,Q}(s)$ for some $s \in N_S(P,Q)$, it can be
seen from Axioms (B) and (C) that $\psi|_{P_0,Q_0} = \delta_{P_0,Q_0}(s)$. 

\subsection{Background on automorphisms}
\subsubsection{Automorphisms of fusion systems}
An automorphism of $\F$ is, by definition, determined by its effect on $S$:
define $\Aut(\F)$ to be the subgroup of $\Aut(S)$ consisting of those
automorphisms $\alpha$ which \emph{preserve fusion} in $\F$ in the sense
that the homomorphism given by $\alpha(P) \xrightarrow{\alpha\phi\alpha^{-1}}
\alpha(Q)$ is in $\F$ for each morphism $P \xrightarrow{\phi} Q$ in $\F$. The
automorphisms $\Aut_\F(S)$ of $S$ in $\F$ thus form a normal subgroup of
$\Aut(\F)$, and the quotient $\Aut(\F)/\Aut_\F(S)$ is denoted by $\Out(\F)$. 

\subsubsection{Automorphisms of linking systems}\label{SS:linkaut}
A self-equivalence of $\L$ is said to be \emph{isotypical} if it sends
distinguished subgroups to distinguished subgroups, i.e. $\alpha(\delta_P(P)) =
\delta_{\alpha(P)}(\alpha(P))$ for each object $P$. It sends inclusions to
inclusions provided $\alpha(\iota_{P,Q}) = \iota_{\alpha(P),\alpha(Q)}$
whenever $P \leq Q$. The monoid $\Aut(\L)$ of isotypical
self-equivalences that send inclusions to inclusions is in fact a group of
automorphisms of the category $\L$, and this has been shown to be the most
appropriate group of automorphisms to consider. Note that $\Aut(\L)$
has been denoted by $\Aut_{\typ}^I(\L)$ in \cite{AschbacherKessarOliver2011,
AOV2012} and elsewhere. When $\alpha \in \Aut(\L)$ and $P$ is an object with
$\alpha(P) = P$, we denote by $\alpha_P$ the automorphism of $\Aut_\L(P)$
induced by $\alpha$. 

The group $\Aut_\L(S)$ acts by conjugation on $\L$ in the following way: given
$\gamma \in \Aut_{\L}(S)$, consider the functor $c_\gamma \in
\Aut(\L)$ which is $c_\gamma(P) = \pi(\gamma)(P)$ on objects, and
which sends a morphism $P \xrightarrow{\phi} Q$ in $\L$ to the morphism
$\gamma\phi\gamma^{-1}$ from $c_\gamma(P)$ to $c_\gamma(Q)$ after replacing
$\gamma$ and $\gamma^{-1}$ by the appropriate restrictions (introduced in
\S\ref{SS:restrictions}). Note that when $\gamma = \delta_{S}(s)$ for some $s
\in S$, then $c_\gamma(P)$ is conjugation by $s$ on objects, and
$c_{\gamma}(\phi) = \delta_{Q,{ }^s\!Q}(s)\circ\phi\circ\delta_{{ }^s\!P,P}(s^{-1})$ for
each morphism $\phi \in \Mor_{\L}(P,Q)$ by the remark on distinguished
morphisms in \S\ref{SS:restrictions}. In particular, when $\L = \L_S^c(G)$ for
some finite group $G$, $c_\gamma$ is truly just conjugation by $s$ on
morphisms.

The image of $\Aut_{\L}(S)$ under the map $\gamma \mapsto c_\gamma$ is seen to
be a normal subgroup of $\Aut(\L)$. The outer automorphism group of
$\L$ is 
\[
\Out(\L) := \Aut(\L)/\{c_\gamma \mid \gamma \in \Aut_\L(S)\}. 
\]
We refer to Lemma~1.14(a) and the surrounding discussion in \cite{AOV2012} for
more details. This group is denoted by $\Out_\typ(\L)$ in
\cite{AschbacherKessarOliver2011, AOV2012} and elsewhere. 

\subsubsection{From linking system automorphisms to fusion system
automorphisms}\label{SS:linkfusaut}
There is a group homomorphism 
\begin{eqnarray}
\label{E:muhat}
\widetilde{\mu} \colon \Aut(\L) \longrightarrow \Aut(\F), 
\end{eqnarray}
given by restriction to $S \cong \delta_S(S) \leq \Aut_\L(S)$; see
\cite[Proposition~6]{Oliver2010}. The map $\widetilde{\mu}$ induces a
homomorphism on quotient groups
\[
\mu\colon \Out(\L) \longrightarrow \Out(\F). 
\]
We write $\mu_\L$ (or $\mu_G$ when $\L = \L_S^c(G)$) whenever we wish to make
clear which linking system we are working with; similar remarks hold for
$\widetilde{\mu}$.  As shown in
\cite[Proposition~II.5.12]{AschbacherKessarOliver2011}, $\ker(\mu)$ has an
interesting cohomological interpretation as the first cohomology group of the
center functor $Z_\F$ on the orbit category of $\F$-centric subgroups, and
$\ker(\widetilde{\mu})$ is correspondingly a certain group of normalized
$1$-cocycles for this functor. 

\subsubsection{From group automorphisms to fusion system and linking system
automorphisms}\label{SS:grouplinkaut}
We also need to compare automorphisms of groups with the automorphisms of their
fusion and linking systems. If $G$ is a finite group with Sylow $p$-subgroup
$S$, then each outer automorphism of $G$ is represented by an automorphism that
fixes $S$. This is a consequence of the transitive action of $G$ on its Sylow
subgroups. More precisely, there is an exact sequence:
\[
1 \to Z(G) \xrightarrow{\incl} N_G(S) \xrightarrow{g \mapsto c_g} \Aut(G,S) \to
\Out(G) \to 1. 
\]
where $\Aut(G,S)$ is the subgroup of $\Aut(G)$ consisting of those
automorphisms that leave $S$ invariant.  

For each pair of $p$-centric subgroups $P, Q \leq S$ and each $\alpha \in
\Aut(G,S)$,  $\alpha$ induces an isomorphism $O_{p'}(C_{G}(P)) \to
O_{p'}(C_G(\alpha(P)))$ and a bijection $N_G(P,Q) \to N_G(\alpha(P),
\alpha(Q))$.  Thus, there is a group homomorphism
\[
\widetilde{\kappa}_G \colon \Aut(G,S) \to \Aut(\L_S^c(G))
\]
which sends $\alpha \in \Aut(G,S)$ to the functor which is $\alpha$ on objects,
and also $\alpha$ on morphisms in the way just mentioned.  This map sends the
image of $N_G(S)$ to $\{c_\gamma \mid \gamma \in \Aut_{\L_S^c(G)}(S)\}$, and so
induces a homomorphism
\[
\kappa_G\colon \Out(G) \to \Out(\L_S^c(G))
\]
on outer automorphism groups. 

It is straightforward to check that the restriction to $S$ of any member of
$\Aut(G,S)$ is an automorphism of the fusion system $\F_S(G)$. Indeed, for
every $\alpha\in\Aut(G,S)$, the automorphism $\alpha|_S$ of $\F_S(G)$ is just
the image of $\alpha$ under $\widetilde{\mu}_G\circ \widetilde{\kappa}_G$.

\subsubsection{Summary}
What we will need in our proofs is summarized in the following commutative
diagram, which is an augmented and updated version of the one found in
\cite[p.186]{AschbacherKessarOliver2011}. 

\begin{eqnarray}
\label{E:diagram}
\vcenter{
\xymatrix{
    &     1 \ar[d]      &      1  \ar[d]    &      1 \ar[d] &   \\
  Z(\F) \ar@{=}[d] \ar[r]^{\incl} &  Z(S) \ar[r] \ar[d]^{\delta_S} & \widehat{Z}^1(\O(\F^c), \Z_\F) \ar[d]^{\widetilde{\lambda}} \ar[r] & \varprojlim{\!}^1(\Z_\F) \ar[r] \ar[d]^{\lambda} & 1 \\
  Z(\F) \ar[r] & \Aut_\L(S)  \ar[r] \ar[d]^{\pi_S}  &  \Aut(\L) \ar[r] \ar[d]^{\widetilde{\mu}} &  \Out(\L) \ar[r] \ar[d]^{\mu} & 1 \\
1 \ar[r] & \Aut_\F(S)  \ar[r] \ar[d]  &  \Aut(\F) \ar[r] \ar[d]  & \Out(\F)  \ar[r]\ar[d] & 1\\
    &   1  & 1 & 1 &  \\ 
}
}
\end{eqnarray}

All sequences in this diagram are exact.  Most of this either is shown in the
proof of \cite[Proposition~II.5.12]{AschbacherKessarOliver2011}, or follows
from the above definitions. The first and second rows are exact by this
reference, except that the diagram was not augmented by the maps out of $Z(\F)$
(the center of $\F$); exactness at $Z(S)$ and $\Aut_\L(S)$ is shown by
following the proof there.  Given
\cite[Proposition~II.5.12]{AschbacherKessarOliver2011}, exactness of the last
column is equivalent to the uniqueness of centric linking systems, a result of
Chermak. In all the cases needed in this article, it follows from
\cite[Lemma~3.2]{LeviOliver2002}. The second-to-last column is then exact
by a diagram chase akin to that in a 5-lemma for groups.

\section{Automorphisms}\label{S:aut}

The isomorphism type of the fusion systems of the Benson-Solomon systems
$\F_{\Sol}(q)$, as $q$ ranges over odd prime powers, is dependent only on the
$2$-share of $q^2-1$ by \cite[Theorem~B]{ChermakOliverShpectorov2008}. Since
the centralizer of the center of the Sylow group is the fusion system of
$\Spin_7(q)$, the same holds also for the fusion systems of these groups.  For
this reason, and because of Proposition~\ref{P:autspin} below, it will be
convenient to fix a nonnegative integer $l$, and take $q_l = 5^{2^l}$ for the
sequel. Let ${\bf F}$ be the algebraic closure of the field with five
elements. 

\subsection{Automorphisms of the fusion system of $\Spin_7(q)$} 

Let $\bar{H} = \Spin_7(\mathbf{F})$. Fix a maximal torus $\bar{T}$ of
$\bar{H}$. Thus, $\bar{H}$ is generated by the $\bar{T}$-root groups
$\bar{X}_{\alpha} = \{x_{\alpha}(\lambda)\colon \lambda\in\mathbf{F}\} \cong
(\mathbf{F},+)$, as $\alpha$ ranges over the root system of type $B_3$, and is
subject to the Chevalley relations of \cite[Theorem~1.12.1]{GLS3}.  For any
power $r$ of $5$, we let $\psi_{r}$ denote the standard Frobenius endomorphism
of $\bar{H}$, namely the endomorphism of $\bar{H}$ which acts on the root
groups via $\psi_{r}(x_{\alpha}(\lambda)) = x_{\alpha}(\lambda^{r})$.

Set $H_l = C_{\bar{H}}(\psi_{q_l})$. Thus, $H_l = \Spin_7(q_l)$ since $\bar{H}$
is of universal type (see \cite[Theorem~2.2.6(f)]{GLS3}).  Also,
$T_{\psi_{q_l}} := C_{\bar{T}}(\psi_{q_l})$ is a maximal torus of $H_l$. For
each power $r$ of $5$, the Frobenius endomorphism $\psi_{r}$ of $\bar{H}$ acts
on $H_l$ in the way just mentioned, and it also acts on $T_{\psi_{q_l}}$ by
raising each element to the power $r$. For ease of notation, we denote by
$\psi_{q_l}$ also the automorphism of $H_l$ induced by $\psi_{q_l}$.

We next recall some items from \cite[Lemmas~4.3,\! 4.8,\!
4.9]{AschbacherChermak2010}.  The normalizer $N_{H_l}(T_{\psi_{q_l}})$ contains
a Sylow $2$-subgroup of $H_l$, and $N_{H_l}(T_{\psi_{q_l}})/T_{\psi_{q_l}}$ is
isomorphic to $C_2 \times S_4$, the Weyl group of $B_3$. We may choose such a
Sylow $2$-subgroup $S_l$ of $N_{H_{l}}(T_{\psi_{q_l}})$ to be invariant under
$\psi_{5}$; we fix such a choice for the remainder. Set $k := k_l = l+2$, and
denote by 
\[ 
T_k := T_{\psi_{q_l}}  \cap  S_l \cong C_{2^k} \times C_{2^k} \times C_{2^k}
\]
the $2$-torsion in the maximal torus $T_{\psi_{q_l}}$ of $H_l$.

The automorphism groups of the Chevalley groups were determined by Steinberg
\cite{Steinberg1960}, and in particular, 
\begin{eqnarray} \label{E:autspingrp}
\Out(H_l) = \Outdiag(H_l) \times \Phi \cong C_2 \times C_{2^l}, 
\end{eqnarray}
where $\Phi$ is the group of field automorphisms, and where $\Outdiag(H_l)$ is
the group of outer automorphisms of $H_l$ induced by $N_{\bar{T}}(H_l)$
\cite[Theorem~2.5.1(b)]{GLS3}.  We mention that $S_l$ is normalized by some
element of $N_{\bar{T}}(H_l)-H_l$ of $2$-power order. So we find
representatives of the elements of $\Phi$ and of $\Outdiag(H_l)$ in
$\Aut(H_l,S_l)$. 

We need to be able to compare automorphisms of the group with automorphisms of
the fusion and linking systems, and this has been carried out in full
generality by Broto, M{\o}ller, and Oliver \cite{BrotoMollerOliver2016} for
groups of Lie type.  Let $\F_{\Spin}(q_l)$ and $\L_{\Spin}^c(q_l)$ be the
associated fusion and centric linking systems over $S_l$ of the group $H_l$,
and recall the maps $\mu_{H_l}$ and $\kappa_{H_l}$ from
\S\S\ref{SS:linkfusaut}, \ref{SS:grouplinkaut}. 

\begin{proposition}\label{P:autspin}
The maps $\mu_{H_l}$ and $\kappa_{H_l}$ are isomorphisms, and hence
\[
\Out(\L_{\Spin}(q_l)) \cong \Out(\F_{\Spin}(q_l)) \cong C_2
\times C_{2^l}.
\]
\end{proposition}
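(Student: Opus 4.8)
The plan is to invoke the machinery of Broto--M\o ller--Oliver \cite{BrotoMollerOliver2016} for groups of Lie type, which computes the relevant automorphism groups of fusion and linking systems and the comparison maps $\kappa$ and $\mu$ in terms of the group-theoretic data. Concretely, $H = \Spin_7(q)$ with $q = 5^{2^k}$ is a universal (simply connected) version of a Chevalley group, so the results there apply. First I would recall that the uniqueness of centric linking systems in the cases at hand, together with exactness of the last column of diagram \eqref{E:diagram}, means it suffices to show one of $\kappa_H$ or $\mu_H$ is an isomorphism and then transfer along $\mu_H\circ\widetilde\kappa_H$ (as recorded at the end of \S\ref{SS:grouplinkaut}); but in fact the cited reference handles both simultaneously, so I would verify its hypotheses and extract the conclusion directly.

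The main content is then a counting/injectivity--surjectivity argument. By Steinberg's theorem \eqref{E:autspingrp}, $\Out(H) \cong \Outdiag(H)\times\Phi \cong C_2\times C_{2^k}$, with canonical representatives in $\Aut(H,S)$ as noted in the paragraph preceding the proposition (using that $S$ is normalized by $N_{\bar T}(H)$ and invariant under $\psi_5$). So $\kappa_H$ is a homomorphism from a group of known order $2^{k+1}$. For surjectivity, one shows the field automorphism $\psi_5$ and the diagonal automorphism induce on $\L_{\Spin}^c(q)$ automorphisms that are nontrivial in $\Out(\L_{\Spin}(q))$ and together generate a subgroup of order $2^{k+1}$ — this is where \cite{BrotoMollerOliver2016} does the work, identifying $\Out(\L_S^c(G))$ for $G$ of Lie type with (essentially) $\Out(G)$ via $\kappa_G$ for $G$ universal. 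For injectivity of $\kappa_H$, one needs that no nontrivial outer automorphism of $H$ becomes inner-by-$\Aut_{\L}(S)$, which again is part of the cited computation (or can be seen because $\ker\kappa_G$ is controlled by a limit of a functor that vanishes here). Then $\mu_H$ being an isomorphism follows: $\mu_H$ is surjective because $\widetilde\mu_H\circ\widetilde\kappa_H$ already hits every automorphism of $\F_{\Spin}(q)$ coming from $\Aut(H,S)$ and these exhaust $\Aut(\F_{\Spin}(q))$ (via $\kappa_H$ surjective and the diagram), and $\mu_H$ is injective because $\ker\mu_H = \varprojlim{}^1(\Z_\F)$, which vanishes in these cases — this is exactly the input from \cite[Lemma~3.2]{LeviOliver2002} invoked for exactness of the last column of \eqref{E:diagram}. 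A diagram chase in \eqref{E:diagram}, of 5-lemma type, then gives that $\kappa_H$ and $\mu_H$ are isomorphisms, and the stated isomorphism $\Out(\L_{\Spin}(5^{2^k})) \cong \Out(\F_{\Spin}(5^{2^k})) \cong C_2\times C_{2^k}$ drops out.

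The step I expect to be the real obstacle is pinning down that $\Out(\F_{\Spin}(q))$ is no larger than the image of $\Aut(H,S)$ — equivalently, that there are no "exotic" automorphisms of the fusion system not visible in the group — and simultaneously that $\ker\widetilde\mu_H$ (the normalized $1$-cocycles of $\Z_\F$) and $\ker\kappa_H$ are trivial. All three are genuine computations about $\Spin_7$; the cleanest route is to cite \cite{BrotoMollerOliver2016} for the statement that for a universal group $G$ of Lie type in the defining characteristic, $\kappa_G$ is an isomorphism and $\mu_G$ is an isomorphism onto $\Aut(\F_S(G))$, and to check only that $\Spin_7(5^{2^k})$ satisfies the (mild) hypotheses there. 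With that citation in hand the proof is short; without it one would have to reprove a special case of their theorem, which is the bulk of the work and which I would not want to redo here.
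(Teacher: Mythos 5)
Your proposal is correct and follows essentially the same route as the paper: the paper's proof simply cites \cite[Propositions~5.14, 5.15]{BrotoMollerOliver2016} for $\kappa_H$ being an isomorphism, deduces that $\mu_H$ is an isomorphism from the exactness of \eqref{E:diagram} together with the vanishing of the relevant higher limits of the center functor in \cite[Lemma~3.2]{LeviOliver2002}, and then reads off $C_2\times C_{2^k}$ from Steinberg's description \eqref{E:autspingrp} of $\Out(H)$. One small correction: the Broto--M{\o}ller--Oliver results needed here concern the \emph{cross-characteristic} situation (the fusion system is at $p=2$ while $H=\Spin_7(5^{2^k})$ is defined in characteristic $5$), not ``defining characteristic'' as you wrote, but this does not affect the argument since their propositions cover exactly this case.
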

\begin{proof}
That $\mu_{H_l}$ is an isomorphism follows from \eqref{E:diagram} and
\cite[Lemma~3.2]{LeviOliver2002}. Also, $\kappa_{H_l}$ is an isomorphism by
\cite[Propositions~5.15, 5.16]{BrotoMollerOliver2016}, using that
$\Outdiag(H_l)$ is a $2$-group. 
\end{proof}

\subsection{Automorphisms of the Benson-Solomon systems}\label{SS:autsol}

We keep the notation from the previous subsection. We denote by $\F :=
\F_{\Sol}(q_l)$ a Benson-Solomon fusion system over the $2$-group $S_l \in
\Syl_2(H_l)$ fixed above, and by $\L := \L^c_{\Sol}(q_l)$ an associated centric
linking system with structural functors $\delta$ and $\pi$. 

\medskip

\emph{For the remainder of Section~\ref{S:aut}, we fix $l \geq 0$, and we set
$H := H_l$ and $S := S_l$.}

\medskip

Observe that $Z(S) \leq T_k$ is of order $2$, and $N_\F(Z(S)) = C_\F(Z(S)) =
\F_{\Spin}(q_l)$ is a fusion system over $S$.  Since $Z(S)$ is contained in
every $\F$-centric subgroup, by Definition~6.1 and Lemma~6.2 of
\cite{BrotoLeviOliver2003}, we may take $N_\L(Z(S)) = C_\L(Z(S))$ for the
centric linking system of $\Spin_7(q_l)$. By the items just referenced,
$C_\L(Z(S))$ is a subcategory of $\L$ with the same objects, and with morphisms
those morphisms $\phi$ in $\L$ such that $\pi(\phi)(z)= z$.  Further,
$C_\L(Z(S))$ has the same inclusion functor $\delta$, and the projection
functor for $C_\L(Z(S))$ is the restriction of $\pi$. (This was also shown in
\cite[Lemma~3.3(a,b)]{LeviOliver2002}.)

Write $\F_z$ for $\F_{\Spin}(q_l)$ and $\L_z$ for $C_\L(Z(S))$ for short.  Each
member of $\Aut(\F)$ fixes $Z(S)$ and so $\Aut(\F)\subseteq\Aut(\F_z)$. So the
inclusion map from $\Aut(\F)$ to $\Aut(\F_z)$ can be thought of as a
``restriction map''
\begin{eqnarray}
\label{E:forget}
\rho\colon \Aut(\F) \longrightarrow \Aut(\F_z)
\end{eqnarray}
given by remembering only that an automorphism preserves fusion in $\F_z$. We
want to make explicit in Lemma~\ref{L:linkres} that the map $\rho$ of
\eqref{E:forget} comes from a restriction map on the level of centric linking
systems. First we need to recall some information about the normalizer of $T_k$
in $\L$ and $\L_z$. 

\begin{lemma}
\label{L:Sselfnorm}
The following hold after identifying $T_k$ with its image $\delta_{T_k}(T_k)
\leq \Aut_\L(T_k)$. 
\begin{enumerate}
\item[\textup{(a)}] $\Aut_{\L_z}(T_k)$ is an extension of $T_k$ by $C_2 \times
S_4$, and $\Aut_{\L}(T_k)$ is an extension of $T_k$ by $C_2 \times GL_3(2)$ in
which the $GL_3(2)$ factor acts naturally on $T_k/\Phi(T_k)$. In each case, a
$C_2$ factor acts as inversion on $T_k$.  Also, $T_k$ is equal to its
centralizer in each of the above normalizers, $Z(\Aut_{\L_z}(T_k)) = Z(S)$, and
$Z(\Aut_\L(T_k)) = 1$. 
\item[\textup{(b)}] $\Aut_{\F}(S) = \Inn(S) = \Aut_{\F_z}(S)$ and $\Aut_{\L}(S)
= \delta_S(S) = \Aut_{\L_z}(S)$. 
\end{enumerate}
\end{lemma}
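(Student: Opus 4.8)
The plan is to deduce everything from the corresponding statements in the group $H = \Spin_7(q)$, transported to the linking systems via the standard dictionary relating $\Mor_{\L^c_S(G)}(P,Q)$ to $N_G(P,Q)/O_{p'}(C_G(P))$.

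For part (a), I would first recall from \cite[\S4]{AschbacherChermak2010} (or \cite[\S10-1]{GorensteinLyons1983}) the structure of $N_H(T_2)$: since $T_2 = T\cap S$ is the full $2$-torsion of the maximal torus $T$, and $N_H(T)/T \cong C_2\times S_4$ is the Weyl group of type $B_3$, we get $N_H(T_2)/C_H(T_2)$ containing this Weyl group, with the $C_2$ acting by inversion (the longest element $-1$ lies in $W(B_3)$). One checks $C_H(T_2) = T$ (the torus is contained in its centralizer, and no nontrivial Weyl element centralizes the full $2$-torsion of a homocyclic group of rank $3$ and exponent $\geq 8$). Passing to the linking system $\L_z = \L^c_{\Spin}(q) = C_\L(Z(S))$, the automorphism group $\Aut_{\L_z}(T_2)$ is $N_H(T_2)/O_{2'}(C_H(T_2)) = N_H(T_2)/O_{2'}(T)$, and since $T$ has odd-order complement $O_{2'}(T)$ with $T/O_{2'}(T) \cong T_2$, we get $\Aut_{\L_z}(T_2)$ as an extension of $\delta_{T_2}(T_2)$ by $C_2\times S_4$. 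That $\delta_{T_2}(T_2)$ is self-centralizing follows from $C_H(T_2) = T$ again; the center of the extension is the fixed points of $C_2\times S_4$ on $T_2$ intersected appropriately, which is exactly the inversion-fixed $2$-torsion, namely $Z(S)$ of order $2$. For $\Aut_\L(T_2)$, the point is that in the Benson-Solomon system the ``Weyl group'' of $T_2$ is enlarged from $C_2\times S_4 \cong C_2\times\GL_2(2)$-ish to $C_2\times\GL_3(2)$ acting naturally on $T_2/\Phi(T_2)\cong \mathbb{F}_2^3$; this is one of the defining features of $\F_{\Sol}(q)$ and is recorded in \cite{LeviOliver2002, AschbacherChermak2010}. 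Since $\GL_3(2)$ acts with no nonzero fixed vector on $\mathbb{F}_2^3$ and the $C_2$ inverts, the only element of $T_2$ centralized by the whole group is trivial, so $Z(\Aut_\L(T_2)) = 1$ and again $T_2$ is self-centralizing.

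For part (b), I would argue $\Aut_\F(S) = \Inn(S)$ by first noting $S\in\Syl_2(H)$ with $S \leq N_H(T)$, so $N_H(S) \leq N_H(T)$ (as $T$ is characteristic in $N_H(T)$, being e.g.\ generated by its elements of odd order times... — more carefully, $T = O^{2'}(N_H(T))$-related invariant, or one cites \cite[10-1]{GorensteinLyons1983} directly). Then $N_H(S)/SC_H(S)$ must embed in the subgroup of $\Aut(S)$ induced from $N_{N_H(T)}(S)$, and a direct check (or the cited references) shows $N_H(S) = S\times O_{2'}$, giving $\Aut_H(S) = \Inn(S)$; since $\F_z = \F_S(H)$ this is $\Aut_{\F_z}(S)$, and $\Inn(S)\leq\Aut_\F(S)\leq\Aut_{\F_z}(S)$ forces equality throughout. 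The linking-system statement $\Aut_\L(S) = \delta_S(S)$ then follows because $\Aut_\L(S)$ is an extension of $\delta_S(S)\cong S$ by $\ker(\pi_S)$, and $\ker(\pi_S)$ is a $p'$-group (being a subgroup of the fundamental group / a quotient involving $O_{p'}$); but $\Aut_\L(S)$ is a $2$-group since $S$ is, so $\ker(\pi_S) = 1$. Concretely, for $\L_z = \L^c_S(H)$ one has $\Aut_{\L_z}(S) = N_H(S)/O_{2'}(C_H(S)) = N_H(S)/O_{2'} \cong S = \delta_S(S)$, and $\delta_S(S)\leq\Aut_\L(S)\leq\Aut_{\L_z}(S)$ gives equality.

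The main obstacle I anticipate is part (a): one must be careful that the relevant extension of $T_2$ is by the \emph{full} Weyl group $C_2\times S_4$ (respectively $C_2\times\GL_3(2)$) and not a proper subgroup or a different extension class, and that $T_2$ really is self-centralizing in $\Aut_\L(T_2)$ — this requires knowing that the enlarged automorphism group of $T_2$ in $\F_{\Sol}(q)$ is exactly $C_2\times\GL_3(2)$ (not something larger), which is precisely the content of the Levi–Oliver/Aschbacher–Chermak construction and should be quoted rather than reproved. The centralizer and center computations are then linear algebra over $\mathbb{Z}/2^{k+2}$ and $\mathbb{F}_2$, routine once the group-theoretic input is in place.
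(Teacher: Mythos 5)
Your part (a) is essentially the paper's: the structural facts about $\Aut_{\L_z}(T_2)$ and $\Aut_\L(T_2)$ are quoted from Aschbacher--Chermak (the paper cites their Lemma~4.3 and Proposition~5.4 and nothing more), and your centralizer/center computations (fixed points of the Weyl-type action, using that $\Omega_1(T_2)\cong T_2/\Phi(T_2)$ as modules since $T_2$ is homocyclic) are the routine supplement. For part (b), however, you take a genuinely different route. The paper stays inside the Benson--Solomon linking system: $T_2$ is characteristic in $S$ (unique abelian subgroup of its order, \cite[Lemma~4.9(c)]{AschbacherChermak2010}), so restriction embeds $\Aut_\L(S)$ into $\Aut_\L(T_2)$, whose Sylow $2$-subgroups are self-normalizing by (a); this yields $\Aut_\L(S)=\delta_S(S)$ directly for the exotic system, then $\Aut_\F(S)=\Inn(S)$ by applying $\pi$, and the $\L_z$, $\F_z$ statements follow as subcategories. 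You instead compute the Sylow normalizer in the group $H=\Spin_7(q)$ to get $\Aut_{\F_z}(S)=\Inn(S)$ and $\Aut_{\L_z}(S)=\delta_S(S)$, and then sandwich in the opposite direction, using that every automorphism of $S$ fixes $z$ so that $\Aut_\F(S)\leq\Aut_{\F_z}(S)$ and $\Aut_\L(S)\leq\Aut_{\L_z}(S)$. That sandwich is valid, and its payoff is that the exotic system is handled with no further input; its cost is that you must actually establish $N_H(S)=S\times O_{2'}(C_H(S))$, which the paper's argument avoids.

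Two repairs are needed in your write-up of (b). First, the sentence claiming $\ker(\pi_S)$ is a $p'$-group is wrong: in a centric linking system $\ker(\pi_S)=\delta_S(Z(S))$ by Axiom (A2), a $2$-group, and ``$\Aut_\L(S)$ is a $2$-group since $S$ is'' is circular at that point (it is equivalent to what you are proving, since $\Inn(S)\in\Syl_2(\Aut_\F(S))$ by saturation). Fortunately this sentence is superfluous: your concrete sandwich $\delta_S(S)\leq\Aut_\L(S)\leq\Aut_{\L_z}(S)\cong S$ already forces equality by order, so simply delete the faulty remark. Second, your justification of $N_H(S)\leq N_H(T)$ via ``$T$ is characteristic in $N_H(T)$'' is a non sequitur; the correct route uses ingredients you already have: $T_2$ is characteristic in $S$ and $C_H(T_2)=T$, so $N_H(S)\leq N_H(T_2)\leq N_H(C_H(T_2))=N_H(T)$. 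From there, self-normalizing Sylow $2$-subgroups of $C_2\times S_4$ give $N_H(S)\leq SN_T(S)$, and one still needs to see that $N_T(S)\leq T_2C_H(S)$ (e.g.\ a three-subgroups/coprime-action argument using $C_S(T_2)=T_2$ shows odd-order elements of $N_T(S)$ centralize $S$); ``a direct check'' elides exactly this step. Relatedly, your one-line reason that $C_H(T_2)=T$ only rules out elements of $N_H(T)\setminus T$, not of $H\setminus N_H(T)$, so that fact too should be quoted from \cite{AschbacherChermak2010} rather than argued inline. With these points fixed, your proof is correct.
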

\begin{proof}
For part (a), see Lemma~4.3 and Proposition~5.4 of
\cite{AschbacherChermak2010}. 

Since $T_k$ is the unique abelian subgroup of its order in $S$ by
\cite[Lemma~4.9(c)]{AschbacherChermak2010}, it is characteristic. By the
uniqueness of restrictions (see Section~\ref{SS:restrictions}), we may
therefore view $\Aut_{\L}(S)$ as a subgroup of $\Aut_{\L}(T_k)$. Since
$\Aut_{\L}(T_k)$ has self-normalizing Sylow $2$-subgroups by (a), the same
holds for $\Aut_\L(S)$.  Now (b) follows for $\L$, and for $\F$ after applying
$\pi$. This also implies the statement for $\L_z$ and $\F_z$, as subcategories. 
\end{proof}

There is a $3$-dimensional commutative diagram related to \eqref{E:diagram}
that is the point of the next lemma.

\begin{lemma}
\label{L:linkres}
There is a restriction map $\hat{\rho}\colon \Aut(\L) \to
\Aut(\L_z)$, with kernel the automorphisms induced by conjugation by
$\delta_S(Z(S)) \leq \Aut_\L(S)$, which makes the diagram
\[
\xymatrix{
\Aut(\L) \ar[d]_{\widetilde{\mu}_{\L}} \ar[r]^{\hat{\rho}} & \Aut(\L_z) \ar[d]^{\widetilde{\mu}_{\L_z}}\\
\Aut(\F) \ar[r]_{\rho} & \Aut(\F_z)\,,
}
\]
commutative, which commutes with the conjugation maps out of
\[
\xymatrix{
\Aut_\L(S) \ar[d]_{\pi_S} \ar[r]^{\id} & \Aut_{\L_z}(S) \ar[d]^{\pi_S}\\
\Aut_\F(S) \ar[r]_{\id} & \Aut_{\F_z}(S), \\
}
\]
and which therefore induces a commutative diagram
\[
\xymatrix{
\Out(\L) \ar[d]_{\mu_\L} \ar[r]^{[\hat{\rho}]} & \Out(\L_z)
\ar[d]^{\mu_{\L_z}}\\ \Out(\F) \ar[r]_{[\rho]} & \Out(\F_z).\\
}
\]
\end{lemma}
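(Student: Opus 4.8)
The plan is to build $\hat\rho$ directly on the category $\L$ and check it lands in $\Aut(\L_z)$. Recall from the discussion preceding the lemma that $\L_z = C_\L(Z(S))$ is the subcategory of $\L$ with the same objects, the same inclusions, and with morphism sets $\Mor_{\L_z}(P,Q) = \{\phi \in \Mor_\L(P,Q) \mid \pi(\phi)|_{Z(S)} = \id_{Z(S)}\}$. So first I would observe that any $\alpha \in \Aut(\L)$ fixes $Z(S)$ as an object — indeed $\alpha$ restricts on $\delta_S(S)$ to an automorphism of $\F$ via $\widetilde\mu_\L$, every automorphism of $\F$ fixes $Z(S)$ since it is the center of $S$ and fusion-invariant, and isotypical functors sending inclusions to inclusions are determined on objects by their effect on $S$. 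Since $\alpha$ permutes objects and sends $Z(S)$-fixing morphisms to $Z(S)$-fixing morphisms (as $\widetilde\mu_\L(\alpha)$ fixes $Z(S)$ pointwise and $\pi$ is a functor: $\pi(\alpha(\phi)) = \widetilde\mu_\L(\alpha)\,\pi(\phi)\,\widetilde\mu_\L(\alpha)^{-1}$ on the relevant subgroups), the restriction $\hat\rho(\alpha) := \alpha|_{\L_z}$ is a self-functor of $\L_z$. It is again isotypical and inclusion-preserving because $\L_z$ shares $\delta$ and the inclusions with $\L$, so $\hat\rho(\alpha) \in \Aut(\L_z)$, and functoriality of restriction makes $\hat\rho$ a group homomorphism.

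Next I would verify the two commuting squares. The upper square $\widetilde\mu_{\L_z}\circ\hat\rho = \rho\circ\widetilde\mu_\L$ is essentially a tautology: both composites take $\alpha$ to its restriction to $\delta_S(S) \le \Aut_\L(S) = \Aut_{\L_z}(S)$ (equality by Lemma~\ref{L:Sselfnorm}(b)), viewed as an automorphism of $\F$ respectively of $\F_z$, and $\rho$ is by definition the inclusion $\Aut(\F) \hookrightarrow \Aut(\F_z)$. For the square involving the conjugation maps, since $\Aut_\L(S) = \Aut_{\L_z}(S)$ (again Lemma~\ref{L:Sselfnorm}(b)) the top horizontal map is literally the identity, and for $\gamma \in \Aut_\L(S)$ the functor $c_\gamma$ on $\L$ restricts to $c_\gamma$ on $\L_z$ because conjugation by $\gamma$ preserves the subcategory $\L_z$ (it preserves $Z(S)$-fixing morphisms, as $\pi(\gamma)$ fixes $Z(S)$). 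Hence $\hat\rho$ carries $\{c_\gamma \mid \gamma \in \Aut_\L(S)\}$ into $\{c_\gamma \mid \gamma \in \Aut_{\L_z}(S)\}$ and descends to $[\hat\rho]\colon \Out(\L)\to\Out(\L_z)$ making the outer square commute; the outer square for the $\pi_S$-squares passes to the same quotients and commutes by the analogous (and trivial) observation downstairs.

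The one genuinely substantive point is the identification of $\ker(\hat\rho)$ with the image of $\delta_S(Z(S))$ under $\gamma \mapsto c_\gamma$. I would argue as follows. Suppose $\alpha \in \ker(\hat\rho)$, i.e.\ $\alpha$ is the identity on all of $\L_z$; in particular $\alpha$ fixes every object and every inclusion, and by Lemma~\ref{L:Sselfnorm}(b), $\widetilde\mu_\L(\alpha) \in \Aut_\F(S) = \Inn(S)$, so after composing with an inner conjugation $c_{\delta_S(s)}$ we may assume $\alpha$ acts trivially on $S$. Then $\alpha \in \ker(\widetilde\mu_\L)$, which by diagram~\eqref{E:diagram} is a group of normalized $1$-cocycles for $\Z_\F$ on $\O(\F^c)$; the condition that $\alpha$ is trivial on the whole centralizer subcategory $\L_z = C_\L(Z(S))$ forces the corresponding cocycle to be supported, roughly speaking, on the $Z(S)$-direction only. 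Concretely I would compare with Lemma~4.3 of \cite{LeviOliver2002} and the analysis there, or compute in $\Aut_\L(T_2)$ using Lemma~\ref{L:Sselfnorm}(a): since $Z(\Aut_{\L_z}(T_2)) = Z(S)$ while $Z(\Aut_\L(T_2)) = 1$, the ``extra'' automorphisms of $\L$ not seen by $\L_z$ are exactly those realized by conjugation by $\delta_{T_2}(Z(S))$, which by the restriction remark in \S\ref{SS:restrictions} equals conjugation by $\delta_S(Z(S))$. Combining: $\ker(\hat\rho)$ modulo inner-by-$Z(S)$ automorphisms is trivial, and conjugation by $\delta_S(Z(S))$ is itself nontrivial in $\Aut(\L)$ (it acts nontrivially on some morphism set, e.g.\ in $\Aut_\L(T_2)$), so $\ker(\hat\rho) = \{c_\gamma \mid \gamma \in \delta_S(Z(S))\}$ exactly. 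The main obstacle is this last kernel computation: pinning down that a linking-system automorphism trivial on $C_\L(Z(S))$ and on $S$ must be $c_{\delta_S(z)}$ for some $z \in Z(S)$ requires either the cocycle bookkeeping from \cite{AschbacherKessarOliver2011, LeviOliver2002} or a careful hands-on argument through $\Aut_\L(T_2)$, and one must take care that $Z(S)$ has order $2$ so there is essentially one nontrivial such automorphism to account for.
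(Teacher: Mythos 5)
Your construction of $\hat{\rho}$ and the verification of the commuting squares are essentially the paper's argument. One caveat: the identity $\pi(\alpha(\phi)) = \widetilde{\mu}_{\L}(\alpha)\,\pi(\phi)\,\widetilde{\mu}_{\L}(\alpha)^{-1}$ is not a consequence of ``$\pi$ is a functor'' (there is no a priori commutation between $\alpha$ and $\pi$); it is a genuine fact about isotypical, inclusion-preserving equivalences whose proof is exactly the Axiom~(C) manipulation the paper carries out in-line for the element $z$ (compare the computations around \eqref{E:alphaz} and \eqref{E:phiz}, using that morphisms in $\L$ are monomorphisms and epimorphisms). Either cite it properly (it is implicit in \cite[Proposition~6]{Oliver2010}) or reproduce that computation. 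Also, $Z(S)$ is not an object of $\L$ (it is not $\F$-centric), so ``$\alpha$ fixes $Z(S)$ as an object'' should be replaced by the statement you actually use, namely that $\widetilde{\mu}_\L(\alpha)$ fixes $z$ because $|Z(S)|=2$.

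The genuine gap is the kernel identification, which you yourself flag as incomplete. Two problems. First, the reduction ``compose with an inner conjugation $c_{\delta_S(s)}$ so that $\alpha$ is trivial on $S$'' is both unnecessary and invalid: if $\hat{\rho}(\alpha)=\id$ then $\alpha$ is already the identity on $\Aut_{\L_z}(S)=\Aut_\L(S)\supseteq\delta_S(S)$, so $\widetilde{\mu}_\L(\alpha)=\id_S$ on the nose (equivalently, $\rho(\widetilde{\mu}_\L(\alpha))=\id$ and $\rho$ is injective since it is an inclusion of subgroups of $\Aut(S)$); and composing with $c_{\delta_S(s)}$ for general $s$ destroys the hypothesis $\alpha|_{\L_z}=\id$, so your ``without loss of generality'' would only compute the kernel up to inner automorphisms, which is not the claim. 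Second, the decisive ingredient is missing: once $\alpha\in\ker(\widetilde{\mu}_\L)$, one does not need any analysis of which cocycles are ``supported on the $Z(S)$-direction,'' nor a computation in $\Aut_\L(T_2)$ (which, as sketched, is not a proof -- an automorphism trivial on $S$ and on $\L_z$ is not obviously determined by its effect on $\Aut_\L(T_2)$). The point is that the top row of \eqref{E:diagram} is exact and $\varprojlim^1(\Z_\F)=0$ by \cite[Lemma~3.2]{LeviOliver2002}, so $Z(S)\to\widehat{Z}^1(\O(\F^c),\Z_\F)=\ker(\widetilde{\mu}_\L)$ is surjective; hence every element of $\ker(\widetilde{\mu}_\L)$, in particular your $\alpha$, is conjugation by some element of $\delta_S(Z(S))$. (For the reverse containment, note that $c_{\delta_S(z)}$ is trivial on $\L_z$ by Axiom~(C), since $\pi(\phi)(z)=z$ for every morphism $\phi$ of $\L_z$ and $z$ is central in $S$.) Without invoking the vanishing of $\varprojlim^1$, your argument does not pin down the kernel.
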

\begin{proof}
Recall that we have arranged $\L_z \subseteq \L$. Thus, the horizontal maps in
the second diagram are the identity maps by Lemma~\ref{L:Sselfnorm}, and so the
lemma amounts to checking that an element of $\Aut(\L)$ sends
morphisms in $\L_z$ to morphisms in $\L_z$. For then, we can define its image
under $\hat{\rho}$ to have the same effect on objects, and to be the
restriction to $\L_z$ on morphisms.

Now fix an arbitrary $\alpha \in \Aut(\L)$, objects $P,Q \in \F^c =
\F_z^c$, and a morphism $\phi \in \Mor_{\L}(P,Q)$. Let $Z(S) = \gen{z}$.  By
two applications of Axiom (C) for a linking system (\S\S\ref{SS:axiomc}),
\begin{eqnarray}\label{E:inclz} 
\iota_{P,S}\circ \delta_{P}(z) = \delta_{S}(z)\circ \iota_{P,S} 
\text{\quad and \quad}
\iota_{\alpha(P),S} \circ \delta_{\alpha(P)}(z) = \delta_S(z) \circ
\iota_{\alpha(P),S}, 
\end{eqnarray}
because $\pi(\iota_{P,S})(z) = \pi(\iota_{\alpha(P),S})(z) = z$.  Since
$\alpha_S$ is an automorphism of $\delta_{S}(S) \cong S$, it sends
$\delta_S(z)$ to itself.  Thus, $\alpha$ sends the right side of the first
equation of \eqref{E:inclz} to the right side of the second, since it sends
inclusions to inclusions. Thus
\[
\iota_{\alpha(P),S} \circ \alpha(\delta_{P}(z)) = \iota_{\alpha(P),S} \circ
\delta_{\alpha(P)}(z). 
\]
However, each morphism in $\L$ is a monomorphism
\cite[Proposition~4]{Oliver2010}, so we obtain
\begin{eqnarray}
\label{E:alphaz}
\alpha(\delta_{P}(z)) = \delta_{\alpha(P)}(z),
\end{eqnarray}
and the same holds for $Q$ in place of $P$. 

Since $\phi \in \Mor(\L_z)$, we have $\pi(\phi)(z) = z$, so by two more
applications of Axiom (C),
\begin{eqnarray}
\label{E:phiz}
\phi \circ \delta_{P}(z) =  \delta_{Q}(z) \circ \phi
\text{\quad and \quad}
\alpha(\phi) \circ \delta_{\alpha(P)}(z) =
\delta_{\alpha(Q)}(\pi(\alpha(\phi))(z)) \circ \alpha(\phi). 
\end{eqnarray}
After applying $\alpha$ to the left side of the first equation of
\eqref{E:phiz}, we obtain the left side of the second by \eqref{E:alphaz}. 
Thus, comparing right sides, we obtain
\[
\delta_{\alpha(Q)}(z) \circ \alpha(\phi) =
\delta_{\alpha(Q)}(\pi(\alpha(\phi)(z))) \circ \alpha(\phi)
\]
Since each morphism in $\L$ is an epimorphism
\cite[Proposition~4]{Oliver2010}, it follows that 
\[
\delta_{\alpha(Q)}(z) = \delta_{\alpha(Q)}(\pi(\alpha(\phi))(z)).
\]
Hence, $\pi(\alpha(\phi))(z) = z$ because $\delta_{\alpha(Q)}$ is injective
(Axiom (A2)). That is, $\alpha(\phi) \in \Mor(\L_z)$ as required.

The kernel of $\hat{\rho}$ is described via a diagram chase in
\eqref{E:diagram}.  Suppose $\hat{\rho}(\alpha)$ is the identity.  Then,
$\alpha$ is sent to the identity automorphism of $S$ by $\widetilde{\mu}_\L$, since
$\rho$ is injective. Thus, $\alpha$ comes from a normalized $1$-cocycle by
\eqref{E:diagram} and these are in turn induced by elements of $Z(S)$ since
$\varprojlim^1(\Z_\F)$ is trivial \cite[Lemma~3.2]{LeviOliver2002}.
\end{proof}

\begin{lemma}
\label{L:littlelemma}
Let $G$ be a finite group and let $V$ be an abelian normal $2$-subgroup of $G$
such that $C_G(V)\leq V$. Let $\alpha$ be an automorphism of $G$ such that
$[V,\alpha]=1$.  Then $[G,\alpha] \leq V$, and if $G$ acts fixed point freely
on $V/\Phi(V)$ and $\alpha^2\in\Inn(G)$, then the order of $\alpha$ is at most
the exponent of $V$.
\end{lemma}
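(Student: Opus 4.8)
The plan is to reduce the whole statement to the single observation that $C_G(V)=V$. Since $V$ is abelian we have $V\leq C_G(V)$, and together with the hypothesis $C_G(V)\leq V$ this yields $C_G(V)=V$; in particular the conjugation action identifies $G/V$ with a subgroup of $\Aut(V)$, and an element of $G$ lies in $V$ precisely when it centralizes $V$.

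First I would prove $[G,\alpha]\leq V$ by showing that $\alpha$ acts trivially on $G/V$. Fix $x\in G$. Since $[V,\alpha]=1$, the automorphism $\alpha$ fixes $V$ pointwise, so for each $v\in V$ we have
\[
\alpha(x)\,v\,\alpha(x)^{-1}=\alpha(x)\,\alpha(v)\,\alpha(x)^{-1}=\alpha(xvx^{-1})=xvx^{-1},
\]
the last equality because $xvx^{-1}\in V$. Hence $x^{-1}\alpha(x)$ centralizes $V$, i.e.\ $x^{-1}\alpha(x)\in C_G(V)=V$. As $x$ was arbitrary, $[G,\alpha]\leq V$.

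For the order bound I would set $d(x):=x^{-1}\alpha(x)\in V$ for each $x\in G$ and establish, by a telescoping computation,
\[
x^{-1}\alpha^n(x)=\prod_{i=0}^{n-1}\alpha^i\bigl(x^{-1}\alpha(x)\bigr)=d(x)^n\qquad(n\geq 0),
\]
where the second equality holds because $d(x)\in V$ is fixed by every power of $\alpha$, so each of the $n$ factors equals $d(x)$. Now take $n=e$, the exponent of $V$. Since $d(x)\in V$ we get $x^{-1}\alpha^{e}(x)=d(x)^{e}=1$ for every $x\in G$, hence $\alpha^{e}=\id$; thus the order of $\alpha$ divides $e$, and in particular it is at most the exponent of $V$.

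I do not expect a genuine obstacle here. The argument above uses only that $V$ is abelian, normal, self-centralizing and fixed pointwise by $\alpha$, so in my approach neither $\alpha^2\in\Inn(G)$ nor fixed-point-freeness of $G$ on $V/\Phi(V)$ is needed (both are of course available in the situation at hand). If one would rather invoke them: writing $\alpha^2=c_g$, the relation $[V,\alpha^2]=1$ forces $g\in C_G(V)=V$, so $\alpha^{2m}=c_{g^m}$ and already the order of $\alpha$ divides $2e$; moreover $[G,\alpha^2]\leq\Phi(V)$ since each $x^{-1}\alpha^2(x)=d(x)^2\in V^2$, so the image of $g$ in $V/\Phi(V)$ is fixed by $G$, and fixed-point-freeness then forces $g\in\Phi(V)$, whence $g^{e/2}=1$ and $\alpha^{e}=c_{g^{e/2}}=\id$. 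The only points that need any care are the identity $C_G(V)=V$ and the telescoping formula; everything else is formal.
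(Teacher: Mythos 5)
Your proof is correct, but it follows a genuinely different and more elementary route than the paper. For the containment $[G,\alpha]\leq V$ the paper argues via the three subgroups lemma inside $G\rtimes\langle\alpha\rangle$, which is in substance the same as your direct computation showing $x^{-1}\alpha(x)\in C_G(V)=V$. The real divergence is in the order bound: the paper forms $G^*=G\rtimes\langle\alpha\rangle$, uses $\alpha^2\in\Inn(G)$ to write $\alpha^2=c_u$ with $u\in C_G(V)=V$, passes to the central quotient by $\langle u^{-1}\alpha^2\rangle$, obtains the a priori bound $|\alpha|\leq 2\exp(V)$, and only then invokes the fixed-point-free action of $G$ on $V/\Phi(V)$ to exclude the value $2\exp(V)$. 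Your telescoping identity $\alpha^n(x)=x\,d(x)^n$ with $d(x)=x^{-1}\alpha(x)\in V$ (valid because $d(x)$ lies in $V$ and is fixed by $\alpha$) gives $\alpha^{\exp(V)}=\id$ outright, so the order of $\alpha$ in fact \emph{divides} $\exp(V)$, and neither $\alpha^2\in\Inn(G)$ nor the fixed-point-freeness hypothesis is needed for this conclusion. This is the standard observation that automorphisms centralizing $V$ and (consequently, using $C_G(V)\leq V$) acting trivially on $G/V$ embed, via $\alpha\mapsto d_\alpha$, into the abelian group of $V$-valued maps on $G$ under pointwise multiplication, hence form an abelian group of exponent dividing $\exp(V)$. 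So your argument is not only valid but strictly stronger and shorter than the paper's; the only thing the paper's extra hypotheses buy in its own proof is the reduction from $2\exp(V)$ to $\exp(V)$, a step your approach renders unnecessary. Your optional second argument (forcing $g\in\Phi(V)$ via fixed-point-freeness) is also correct, but it is redundant given your main computation.
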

\begin{proof}
As $[V,\alpha]=1$, we have $[V,G,\alpha]\leq [V,\alpha]=1$ and
$[\alpha,V,G]=[1,G]=1$. Hence, by the Three subgroups lemma, it follows that
$[G,\alpha,V]=1$. As $C_G(V)\leq V$, this means 
\[
[G,\alpha]\leq V.
\] 
Assume from now on that $G$ acts fixed point freely on $V/\Phi(V)$. 
Write $G^*:= G \rtimes \gen{\alpha}$ for the semidirect product of $G$ by
$\gen{\alpha}$.  As $[V,\alpha]=1$ and $[G,\alpha]\leq V$, the subgroup
$W:=V\gen{\alpha}$ is an abelian normal subgroup of $G^*$ with $[W,G^*]\leq V$.

As $[V,\alpha]=1$, it follows that $[V,\alpha^2]=1$. So $\alpha^2\in\Inn(G)$ is
realized by conjugation with an element of $C_G(V)=V$. Pick $u\in V$ with
$\alpha^2=c_u|_G$. This means that, for any $g\in G$, we have ${
}^{u^{-1}\alpha^2}\!g=g$ in $G^*$. So $Z:=\gen{u^{-1}\alpha^2}$ centralizes $G$
in $G^*$. Since $W$ is abelian and contains $Z$, it follows that $Z$ lies in the
centre of $G^*=WG$. Set \[\ol{G^*}=G^*/Z.\] Because $C_G(V)\leq V$, the order of $u$
equals the order of $c_u|_G=\alpha^2$. Hence, $Z\cap G=1=Z\cap\gen{\alpha}$.
So  $|\ov{\alpha}|=|\alpha|$ and $G\cong \ov{G}$. In particular, we have
$\ov{V}\cong V$ and $\ov{G}$ acts fixed point freely on $\ov{V}/\Phi(\ov{V})$.
Note also that $\ov{\alpha}^2=\ov{u}$. Hence $|\ov{W}/\ov{V}|=2$ and
$\Phi(\ov{W})\leq \ov{V}$. Moreover, letting $n\in\mathbb{N}$ such that $2^n$
is the exponent of $V$, we have $|\alpha|=|\ov{\alpha}|\leq 2\cdot
2^n=2^{n+1}$. Assume $|\ov{\alpha}|=2^{n+1}$. Then $\ov{u}=\ov{\alpha}^2$ has
order $2^n$ and is thus not a square in $\ov{V}$. Note that
$\Phi(\ov{V})=\{v^2\colon v\in\ov{V}\}$ and $\Phi(\ov{W})=\{w^2\colon w\in
W\}=\gen{\ov{\alpha}^2}\Phi(\ov{V})\leq\ov{V}$. Hence,
$\Phi(\ov{W})/\Phi(\ov{V})$ has order $2$. As $\ov{G}$
normalizes $\Phi(\ov{W})/\Phi(\ov{V})$, it thus centralizes 
$\Phi(\ov{W})/\Phi(\ov{V})$ contradicting the assumption that
$\ov{G}$ acts fixed point freely on $\ov{V}/\Phi(\ov{V})$. Thus
$|\alpha|=|\ov{\alpha}|\leq 2^n$ which shows the assertion.
\end{proof}

We are now in a position to determine the automorphisms of $\F=\F_{\Sol}(q_l)$
and $\L=\L^c_{\Sol}(q_l)$. It is known that the field automorphisms induce
automorphisms of these systems as we will make precise next. Recall that the
field automorphism $\psi_5$ of $H$ of order $2^l$ normalizes $S$ and so
$\psi_5|_S$ is an automorphism of $\F_z=\F_S(H)$. By
\cite[Lemma~5.7]{AschbacherChermak2010}, the automorphism $\psi_5|_S$ is
actually also an automorphism of $\F$. We thus denote it by $\psi_{\F}$ and
refer to it as the field automorphism of $\F$ induced by $\psi_5$. By
Proposition~\ref{P:autspin}, this automorphism has order $2^l$.

\smallskip

By \cite[Proposition~3.3(d)]{LeviOliver2002}, there is a unique lift $\psi$ of
$\psi_\F$ under $\widetilde{\mu}_\L$ that is the identity on
$\pi^{-1}(\F_{\Sol}(5))$ and restricts to $\widetilde{\kappa}_H(\psi_5)$ on
$\L_z$. We refer to $\psi$ as the field automorphism of $\L$ induced by
$\psi_5$.

\begin{theorem}
\label{T:outsol}
Fix $l \geq 0$, and set $q_l = 5^{2^l}$ as before. The map $\mu_\L\colon
\Out(\L^c_{\Sol}(q_l)) \to \Out(\F_{\Sol}(q_l))$ is an isomorphism, and 
\[
\Out(\L^c_{\Sol}(q_l)) \cong \Out(\F_{\Sol}(q_l)) \cong C_{2^l}
\]
is induced by field automorphisms.  Also, the automorphism group
$\Aut(\L^c_{\Sol}(q_l))$ is a split extension of $S$ by
$\Out(\L^c_{\Sol}(q_l))$; in particular, it is a $2$-group. 

More precisely, if $\psi$ is the field automorphism of $\L^c_{\Sol}(q_l)$
induced by $\psi_5$, then $\psi$ has order $2^l$ and $\Aut(\L^c_{\Sol}(q_l))$
is the semidirect product of $\Aut_\L(S)\cong S$ with the cyclic group
generated by $\psi$.
\end{theorem}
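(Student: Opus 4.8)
The plan is to determine $\Aut(\F)$ first and then transport the answer across $\widetilde{\mu}_\L$. That $\mu_\L$ is an isomorphism is read off from diagram~\eqref{E:diagram} together with the vanishing of $\varprojlim^1(\Z_\F)$ \cite[Lemma~3.2]{LeviOliver2002}. The same input, as in the kernel computation in the proof of Lemma~\ref{L:linkres}, shows that $\widetilde{\mu}_\L\colon\Aut(\L)\to\Aut(\F)$ is surjective with kernel the order-$2$ group $\{c_{\delta_S(v)}\mid v\in Z(S)\}$, which by Lemma~\ref{L:Sselfnorm}(b) lies inside $\Aut_\L(S)=\delta_S(S)$. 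Hence, once I have shown $\Aut(\F)=\Inn(S)\rtimes\langle\psi_\F\rangle$ with $|\psi_\F|=2^k$, the preimage of $\Inn(S)$ under $\widetilde{\mu}_\L$ is $\Aut_\L(S)$ and the preimage of $\langle\psi_\F\rangle$ is $\langle\psi\rangle\cdot\ker(\widetilde{\mu}_\L)$; since $\Aut_\L(S)\trianglelefteq\Aut(\L)$ and $|\psi|=2^k$ force $\Aut_\L(S)\cap\langle\psi\rangle=1$, this gives $\Aut(\L)=\Aut_\L(S)\rtimes\langle\psi\rangle$, a $2$-group of order $|S|\cdot 2^k$. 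So everything reduces to the three assertions $\Out(\F)\cong C_{2^k}$, $|\psi_\F|=2^k$, and $|\psi|=2^k$.

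For the first two I would restrict to $\F_z=\F_{\Spin}(q)$. Since $\Aut_\F(S)=\Inn(S)=\Aut_{\F_z}(S)$ by Lemma~\ref{L:Sselfnorm}(b), the map $\rho$ of \eqref{E:forget} is an inclusion of subgroups of $\Aut(S)$, so $[\rho]\colon\Out(\F)\to\Out(\F_z)$ is injective. By Proposition~\ref{P:autspin}, $\Out(\F_z)\cong\Outdiag(H)\times\Phi\cong C_2\times C_{2^k}$, and $[\rho]$ carries the class of $\psi_\F$ onto the field subgroup $\Phi$; thus $|\psi_\F|=2^k$ in $\Out(\F)$, and, since $\psi_5^{2^k}=\psi_q$ acts trivially on $H=C_{\bar H}(\psi_q)$, also $|\psi_\F|=2^k$ in $\Aut(\F)$. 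As $\Phi\leq\operatorname{im}[\rho]\leq\Outdiag(H)\times\Phi$, proving $\Out(\F)=\langle\psi_\F\rangle$ amounts to ruling out the diagonal factor: no automorphism of $\F$ may induce the nontrivial class of $\Outdiag(H)$ on $\F_z$.

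To do this I would argue by contradiction. If $\beta\in\Aut(\F)$ induced the diagonal class, then after adjusting $\beta$ by a power of $\psi_\F$ and an inner automorphism I may assume $\beta|_S=c_d|_S$ for a representative $d\in N_{\bar T}(H)$ of the diagonal class normalizing $S$. As $d$ lies in the abelian group $\bar T\supseteq T_2$, the automorphism $\beta$ fixes $T_2$ pointwise, and $\beta^2\in\Inn(S)$ (because $\Outdiag(H)$ has order $2$ and $[\rho]$ is injective) is realized by an element of $C_S(T_2)=T_2$. Lifting $\beta$ to $\widehat{\beta}\in\Aut(\L)$, the automorphism $\widehat{\beta}$ fixes the object $T_2$ (characteristic in $S$), centralizes $\delta_{T_2}(T_2)$, and satisfies $\widehat{\beta}^2=c_{\delta_S(s')}$ with $s'\in T_2$. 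Writing $G:=\Aut_\L(T_2)$, $V:=\delta_{T_2}(T_2)$ and $\alpha:=\widehat{\beta}_{T_2}$, Lemma~\ref{L:Sselfnorm}(a) gives $C_G(V)=V$ and $Z(G)=1$; one checks $[V,\alpha]=1$ and $\alpha^2\in\Inn(G)$, so Lemma~\ref{L:littlelemma} yields $[G,\alpha]\leq V$. Thus $\alpha$ acts trivially on $G/V\cong C_2\times GL_3(2)$ and is governed by a $1$-cocycle on that quotient with values in $V$. Restricting this cocycle to a Singer cycle $C_7\leq GL_3(2)$, which acts fixed-point-freely on $V/\Phi(V)$, and using $H^1(C_7,V)=0$ and $C_V(C_7)=1$, I can correct $\widehat{\beta}$ by an inner automorphism so that $\alpha$ becomes trivial on the preimage of $C_7$ in $G$; then $\alpha^2=1$, whence $\widehat{\beta}^2=\id_\L$ since $Z(G)=1$. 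The decisive remaining point — and the step I expect to be the main obstacle — is to rule out such an involution: one inducing the diagonal automorphism on $\L_z$, centralizing $T_2$, and fixing the order-$7$ automorphism of $T_2$ that generates the new fusion of $\F$ over $\F_z$. Concretely, the diagonal cocycle on $\Aut_{\L_z}(T_2)$ (over $C_2\times S_4$) must be shown to admit no extension of this kind over $\Aut_\L(T_2)$ (over $C_2\times GL_3(2)$), and establishing this cohomological incompatibility, which is driven by the fixed-point-free $C_7$-action on $T_2/\Phi(T_2)$, is the crux.

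It remains to see $|\psi|=2^k$. By the defining property of $\psi$ we have $\psi|_{\L_z}=\widetilde{\kappa}_H(\psi_5)$, which has order $2^k$ by Proposition~\ref{P:autspin}; hence $|\psi|\geq 2^k$. Moreover $\psi^{2^k}$ restricts to $\id$ on $\pi^{-1}(\F_{\Sol}(5))$ and lies in $\ker(\widetilde{\mu}_\L)=\{c_{\delta_S(v)}\mid v\in Z(S)\}$, since $\widetilde{\mu}_\L(\psi^{2^k})=\psi_\F^{2^k}=\id_\F$. But the nontrivial element $c_{\delta_S(z)}$ of that kernel is \emph{not} the identity on $\pi^{-1}(\F_{\Sol}(5))$: as $z$ is not central in $\F_{\Sol}(5)$, there is a morphism $\phi$ of $\pi^{-1}(\F_{\Sol}(5))$ with $\pi(\phi)(z)\neq z$, and then $c_{\delta_S(z)}(\phi)\neq\phi$ by Axiom~(C). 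Therefore $\psi^{2^k}=\id_\L$ and $|\psi|=2^k$. Together with $\Out(\F)=\langle\psi_\F\rangle\cong C_{2^k}$ and $|\psi_\F|=2^k$, the reduction in the first paragraph then yields $\Aut(\F)=\Inn(S)\rtimes\langle\psi_\F\rangle$ and $\Aut(\L)=\Aut_\L(S)\rtimes\langle\psi\rangle$, which is the theorem.
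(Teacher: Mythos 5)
Your reduction (first, second and last paragraphs) is sound and close to the paper's: injectivity of $[\rho]$, Proposition~\ref{P:autspin}, the identification of $\langle\psi_\F\rangle$ with the field factor, and the argument that $\psi^{2^k}$ lies in $\{c_{\delta_S(v)}\mid v\in Z(S)\}$ but cannot equal $c_{\delta_S(z)}$ because $z$ is not central in $\F_{\Sol}(5)$ all work. But the decisive step --- showing that no automorphism of $\F$ (equivalently of $\L$) induces the diagonal class on $\F_z$ --- is exactly the point you leave open, and you say so yourself: your plan ends with an unproven ``cohomological incompatibility'' about extending a cocycle from $\Aut_{\L_z}(T_2)$ to $\Aut_\L(T_2)$. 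As it stands this is a genuine gap, not a technicality: everything else in the theorem is comparatively formal, and $\Out(\F)\cong C_{2^k}$ is precisely the content of ruling out the diagonal factor.

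The missing idea is an order count, not a cocycle computation, and your normalization step is what destroys it. The paper fixes a concrete representative $c_t$ of the diagonal class with $t\in\bar T$ chosen so that $t^2\in T_2$ has order $2^{k+2}$ (so $|t|=2^{k+3}$), and supposes $\tau=\widetilde\kappa_H(c_t)$ lifts under the restriction map $\hat\rho$ of Lemma~\ref{L:linkres} to $\hat\tau\in\Aut(\L)$. Because $\ker(\hat\rho)$ consists only of conjugations by $\delta_S(Z(S))$, the square is pinned down: $\hat\tau^2=c_{t^2}$ or $c_{t^2z}$, and since $Z(\Aut_\L(T_2))=1$ these conjugations have order exactly $2^{k+2}$ in $\Aut(\Aut_\L(T_2))$; hence $\alpha:=\hat\tau_{T_2}$ has order exactly $2^{k+3}$. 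On the other hand $\alpha$ centralizes $T_2$ and $\alpha^2$ is inner, and here the paper invokes the \emph{second} conclusion of Lemma~\ref{L:littlelemma} --- the bound $|\alpha|\leq\exp(T_2)=2^{k+2}$, which is where the fixed-point-free action of $\Aut_\L(T_2)$ on $T_2/\Phi(T_2)$ (i.e.\ the extra $GL_3(2)$-fusion of $\F_{\Sol}$) enters --- giving an immediate numerical contradiction. You quote Lemma~\ref{L:littlelemma} but use only $[G,\alpha]\leq V$, and by first adjusting $\beta$ by inner automorphisms and powers of $\psi_\F$ at the level of $\Aut(\F)$ you only know $\beta^2$ is conjugation by \emph{some} element of $T_2$, of uncontrolled (possibly small) order; the crucial quantitative information $|t^2|=2^{k+2}$ is gone, which is why your route dead-ends in an extension problem you cannot settle. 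To repair your proof, drop the normalization, work with the specific lift of $\widetilde\kappa_H(c_t)$ through $\hat\rho$, and apply the exponent bound of Lemma~\ref{L:littlelemma} as above.
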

\begin{proof}
We continue to write $\L = \L^c_{\Sol}(q_l)$, $\F = \F_{\Sol}(q_l)$, $\L_z =
\L^c_{\Spin}(q_l)$, and $\F_z = \F_{\Spin}(q_l)$, and we continue to assume
that $\L$ has been chosen so as to contain $\L_z$ as a linking subsystem.
Recall that $T_k \leq S$ is homocyclic of rank $3$ and exponent $2^{k} =
2^{l+2}$. 

We first check whether the outer automorphism of $\L_z$ induced by a diagonal
automorphism of $H$ extends to $\L$, and we claim that it doesn't.  A non-inner
diagonal automorphism of $H$ is induced by conjugation by an element $t$ of
$\bar{T}$ by \cite[Theorem~2.5.1(b)]{GLS3}.  Its class as an outer automorphism
has order $2$, so if necessary we replace $t$ by an odd power and assume that
$t^2 \in T_k$. Now $T_{k}$ consists of the elements of $\bar{T}$ of order
dividing $2^{k}$, so $t$ has order $2^{k+1}$ and induces an automorphism of $H$
of order at least $2^{k}$.  For ease of notation, we identify $T_k$ with
$\delta_{T_k}(T_k)\leq \Aut_\L(T_k)$, and we identify $s \in S$ with
$\delta_S(s) \in \Aut_\L(S)$.

Let $\tau = \widetilde{\kappa}_H([c_t]) \in \Aut(\L_z)$, and assume that
$\tau$ lifts to an element $\hat{\tau} \in \Aut(\L)$ under the map
$\hat{\rho}$ of Lemma~\ref{L:linkres}.  As $\hat{\rho}(\hat{\tau}) = \tau =
\widetilde{\kappa}_H(c_t)$, we have
$\hat{\rho}(\hat{\tau}^2)=\tau^2=\widetilde{\kappa}_H(c_{t^2})$, i.e.
$\hat{\rho}(\hat{\tau}^2)$ acts on every object and every morphism of
$\L_z=\L_S^c(H)$ as conjugation by $t^2$.  Similarly, if we take the
conjugation automorphism $c_{t^2}$ of $\L$ by $t^2$ (or more precisely the
conjugation automorphism $c_{\delta_S(t^2)}$ of $\L$ by $\delta_S(t^2)$), then
$\hat{\rho}(c_{t^2})$ is just the conjugation automorphism of $\L_z$ by $t^2$.
So according to the remark at the end of \S\S\ref{SS:linkaut}, the automorphism
$\hat{\rho}(c_{t^2})$ acts also on $\L_z$ via conjugation by $t^2$, which shows that
$\hat{\rho}(\hat{\tau}^2)=\hat{\rho}(c_{t^2})$.  By the description of the
kernel in Lemma~\ref{L:linkres}, we have $\hat{\tau}^2=c_{t^2}$ or
$\hat{\tau}^2=c_{t^2z}$. 

Now set $\alpha:=\hat{\tau}_{T_k} \in \Aut(\Aut_\L(T_k))$.  From what we have
shown, it follows that $\alpha$ equals the conjugation automorphism $c_{t^2}$
or $c_{t^2z}$ of $\Aut_\L(T_k)$. Note that $|c_{t^2z}| = |t^2z| = |t^2| =
|c_{t^2}|$, since $Z(\Aut_\L(T_k))=1$ by Lemma~\ref{L:Sselfnorm}(a). Hence, 
\begin{eqnarray}
\label{E:orderalpha}
|\alpha| = 2|\alpha^2| = 2|t^2| = 2^{k+1},
\end{eqnarray}
On the other hand, $\alpha$ centralizes $T_k$, and we have seen that 
$\alpha^2$ is an inner automorphism of $\Aut_\L(T_k)$.  Moreover, by
Lemma~\ref{L:Sselfnorm}(a), $C_{\Aut_\L(T_k)}(T_k)=T_k$, and $\Aut_\L(T_k)$
acts fixed point freely on $T_k/\Phi(T_k)$. The hypotheses of
Lemma~\ref{L:littlelemma} thus hold for $G=\Aut_\L(T_k)$ and $\alpha \in
\Aut(G)$. So $\alpha$ has order at most $2^{k}$ by that lemma, contradicting
\eqref{E:orderalpha}.  We conclude that a diagonal automorphism of $\L_z$ does
not extend to an automorphism of $\L$.  

The existence of the field automorphism $\psi_{\F}$ of $\F$ and the fact that
$\psi_\F$ has order $2^l$ now yield together with Proposition~\ref{P:autspin}
that $\Out(\F) \cong C_{2^l}$ is generated by the image of $\psi_\F$ in
$\Out(\F)$. Moreover, by \cite[Lemma~3.2]{LeviOliver2002} and the exactness of
the third column of \eqref{E:diagram}, the maps $\mu_\L$ and $\mu_{\L_z}$ are
isomorphisms. Thus, 
\[
\Out(\L) \cong \Out(\F) \cong C_{2^l}.  
\]

Let $\psi$ be the field automorphism of $\L$ induced by $\psi_5$ as above.
Then $\psi$ is the identity on $\pi^{-1}(\F_{\Sol}(5))$ by definition.  It
remains to show that $\psi$ has order $2^l$, since this will imply that
$\Aut(\L)$ is a split extension of $\Aut_\L(S) \cong S$ by
$\langle\psi\rangle\cong \Out(\L) \cong \Out(\F)$. 

The automorphism $\psi^{2^l}$ maps to the trivial automorphism of $\F$, and so
is conjugation by an element of $Z(S)$ by \eqref{E:diagram}.  Now $\psi^{2^l}$
is trivial on $\Aut_{\L^c_{\Sol}(5)}(\Omega_2(T_k))$, whereas $z \notin
Z(\Aut_{\L^c_{\Sol}(5)}(\Omega_2(T_k)))$ by Lemma~\ref{L:Sselfnorm}(a) as $T_2
= \Omega_2(T_k)$ is the torus of $\L^c_{\Sol}(5)$. Thus, since a morphism
$\phi$ is fixed by $c_z$ if and only if $\pi(\phi)(z) = z$ (Axiom (C)), we
conclude that $\psi^{2^l}$ is the identity automorphism of $\L$, and this
completes the proof.
\end{proof}

\section{Extensions}\label{S:ext}

In this section, we recall a result of Linckelmann on the Schur multipliers of
the Benson-Solomon systems, and we prove that each saturated fusion system $\F$
with $F^*(\F) \cong \F_{\Sol}(q_l)$ is a split extension of $F^*(\F)$ by a
group of outer automorphisms. 

Recall that the \emph{hyperfocal subgroup} of a saturated $p$-fusion system
$\F$ over $S$ is defined to be the subgroup of $S$ given by
\[
\hyp(\F) = \gen{[\phi, s] := \phi(s)s^{-1} \mid s \in P \leq S \text{ and }
\phi \in O^p(\Aut_\F(P))}. 
\]
A subsystem $\F_0$ over $S_0 \leq S$ is said to be of \emph{$p$-power index} in
$\F$ if $\hyp(\F) \leq S_0$ and $O^p(\Aut_\F(P)) \leq \Aut_{\F_0}(P)$ for each
$P \leq S_0$.  There is always a unique normal saturated subsystem on
$\hyp(\F)$ of $p$-power index in $\F$, which is denoted by $O^p(\F)$ \cite[\S
I.7]{AschbacherKessarOliver2011}. We will need the next lemma in
\S\S\ref{SS:exttop}.

\begin{lemma}\label{L:hyperfocal}
Let $\F$ be a saturated fusion system over $S$, and let $\F_0$ be a weakly
normal subsystem of $\F$ over $S_0\leq S$. Assume that $O^p(\Aut_\F(S_0))\leq
\Aut_{\F_0}(S_0)$. Then $O^p(\Aut_\F(P))\leq \Aut_{\F_0}(P)$ for every $P \leq
S_0$. Thus, if in addition $\hyp(\F) \leq S_0$, then $\F_0$ has $p$-power index
in $\F$.  
\end{lemma}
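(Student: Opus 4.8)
The plan is to reduce the claim about all $P \leq S_0$ to the single hypothesis about $\Aut_\F(S_0)$ by an induction on the index $|S_0 : P|$, exploiting saturation of $\F_0$ together with the standard fact that every subgroup of $S_0$ is $\F_0$-conjugate into a fully $\F_0$-normalized (hence receptive) subgroup. The base case $P = S_0$ is exactly the hypothesis $O^p(\Aut_\F(S_0)) \leq \Aut_{\F_0}(S_0)$. For the inductive step, fix $P < S_0$ and an automorphism $\phi \in O^p(\Aut_\F(P))$; I want to show $\phi \in \Aut_{\F_0}(P)$. Since $\F_0 \norm \F$ is weakly normal, $\phi$ normalizes $\F_0$ in the sense of the invariance condition, so $\phi$ permutes the $\F_0$-automorphisms of $P$; the group $\Aut_{\F_0}(P)\langle\phi\rangle$ is then a well-defined subgroup of $\Aut_\F(P)$, and it suffices to show $O^p$ of it lies in $\Aut_{\F_0}(P)$, equivalently that $\phi$ induces a $p$-element of the (automorphism) outer quotient modulo $\Aut_{\F_0}(P)$.

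The key move is to push $\phi$ up to a larger subgroup. Because $P < S_0$, we have $P < N_{S_0}(P)$, and by the extension axioms for the weakly normal subsystem $\F_0$ (receptivity after conjugating to a fully normalized subgroup, together with the fact that $O^p(\Aut_\F(P))$ is generated by $p'$-elements which act on $N_S(P)$ via $N_S(P)$-equivariant maps), one shows that $\phi$ — being in $O^p(\Aut_\F(P))$ — extends to some $\hat\phi \in \Aut_\F(\hat P)$ with $P < \hat P \leq N_{S_0}(P)$ and $\hat\phi \in O^p(\Aut_\F(\hat P))$. Here one uses that a $p'$-automorphism of $P$ lying in $\F$, when conjugated into a fully $\F$-normalized target and combined with the surjectivity of $\Aut_S(\,\cdot\,)$ onto the relevant Sylow-type data, extends over the normalizer; the $O^p$-membership of the extension is inherited because we may take $\hat\phi$ to be a product of $p'$-elements of $\Aut_\F(\hat P)$ restricting appropriately to $\phi$. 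By the inductive hypothesis applied to $\hat P$ (of strictly smaller index in $S_0$), we get $\hat\phi \in \Aut_{\F_0}(\hat P)$, and then restricting back, $\phi = \hat\phi|_P \in \Aut_{\F_0}(P)$ since $\F_0$ is a subsystem closed under restriction of morphisms. This completes the induction, proving $O^p(\Aut_\F(P)) \leq \Aut_{\F_0}(P)$ for all $P \leq S_0$.

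Finally, for the last sentence: if in addition $\hyp(\F) \leq S_0$, then the two defining conditions for $\F_0$ to have $p$-power index in $\F$ are precisely $\hyp(\F) \leq S_0$ (given) and $O^p(\Aut_\F(P)) \leq \Aut_{\F_0}(P)$ for all $P \leq S_0$ (just proved); since $\F_0$ is saturated and weakly normal, hence normal, it is the required subsystem of $p$-power index.

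I expect the main obstacle to be the extension step in the middle paragraph: carefully producing $\hat\phi \in O^p(\Aut_\F(\hat P))$ for a suitable $\hat P$ with $P < \hat P \leq N_{S_0}(P)$, restricting to $\phi$. This requires marrying the receptivity/extension axioms of a saturated fusion system with the generation of $O^p(\Aut_\F(P))$ by $p'$-elements and the fact that such extensions can be chosen inside $N_{S_0}(P)$ rather than merely inside $N_S(P)$ — the latter because $\F_0$ lives over $S_0$ and weak normality forces the relevant normalizer data to stay within $S_0$. Once that is in place, the rest is the bookkeeping of an index induction and the defining conditions for $p$-power index, both routine given the references in \S I.7 of \cite{AschbacherKessarOliver2011}.
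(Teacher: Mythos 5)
Your overall shape (downward induction on $|P|$, pushing information up to $Q = N_{S_0}(P)$ and restricting back, with the hypothesis on $\Aut_\F(S_0)$ as the base case) matches the paper's proof, but the step you yourself flag as the main obstacle is a genuine gap, and as stated it is false. You claim that an element $\phi \in O^p(\Aut_\F(P))$ extends to some $\hat\phi \in O^p(\Aut_\F(\hat P))$ with $P < \hat P \leq N_{S_0}(P)$. The extension axiom only extends a morphism $\phi$ over the subgroup $N_\phi = \{\,g \in N_S(P) : \phi c_g \phi^{-1} \in \Aut_S(P)\,\}$, and for a fully normalized centric radical $P$ a $p'$-element $\phi \in \Aut_\F(P)$ that does not normalize $\Aut_S(P)$ can have $N_\phi = P$; such $\phi$ extends to no strictly larger subgroup at all, inside $N_{S_0}(P)$ or otherwise. (There is also a secondary issue: you conjugate $P$ to a fully $\F_0$-normalized position, but to invoke receptivity for $\phi \in \Aut_\F(P)$ you need $P$ fully $\F$-normalized; the paper arranges this first and then cites the fact that fully $\F$-normalized implies fully $\F_0$-normalized for a weakly normal subsystem.)

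The paper's proof avoids exactly this trap by never trying to extend arbitrary elements of $O^p(\Aut_\F(P))$. Instead, after conjugating $P$ to be fully $\F$-normalized, it uses that $\Aut_{S_0}(P)$ is a Sylow $p$-subgroup of the normal subgroup $\Aut_{\F_0}(P)$ and applies a Frattini argument:
\[
\Aut_\F(P) = \Aut_{\F_0}(P)\, N_{\Aut_\F(P)}(\Aut_{S_0}(P)),
\]
so the quotient $\Aut_\F(P)/\Aut_{\F_0}(P)$ is represented by automorphisms $\psi$ normalizing $\Aut_{S_0}(P)$. For such $\psi$ one has $Q = N_{S_0}(P) \leq N_\psi$, so the extension axiom really does apply and yields a surjection $N_{\Aut_\F(Q)}(P) \to N_{\Aut_\F(P)}(\Aut_{S_0}(P))$ carrying the $\F_0$-part onto the $\F_0$-part; the maximality of the counterexample (your inductive hypothesis) then forces the quotient to be a $p$-group at the level of these normalizer quotients, not element by element. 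If you want to salvage your write-up, replace the middle paragraph with this Frattini/normalizer comparison; the surrounding induction and the final deduction of $p$-power index from the definition are fine as you have them.
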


\begin{proof}
Note that $\Aut_{\F_0}(P)$ is normal in $\Aut_\F(P)$ for every $P\leq S_0$,
since $\F_0$ is weakly normal in $\F$. We need to show that
$\Aut_\F(P)/\Aut_{\F_0}(P)$ is a $p$-group for every $P\leq S_0$. Suppose this
is false and let $P$ be a counterexample of maximal order. Our assumption gives
$P<S_0$. Hence, $P<Q:=N_{S_0}(P)$, and the maximality of $P$ implies that 
\[ 
\Aut_\F(Q)/\Aut_{\F_0}(Q)
\]
is a $p$-group. Notice that 
\begin{eqnarray*}
N_{\Aut_\F(Q)}(P)/N_{\Aut_{\F_0}(Q)}(P) &\cong& N_{\Aut_{\F}(Q)}(P)\Aut_{\F_0}(Q)/\Aut_{\F_0}(Q)\\ 
&\leq& \Aut_\F(Q)/\Aut_{\F_0}(Q)
\end{eqnarray*}
and thus $N_{\Aut_\F(Q)}(P)/N_{\Aut_{\F_0}(Q)}(P)$ is a $p$-group. 

If $\alpha\in\Hom_\F(P,S)$ with $\alpha(P)\in\F^f$ then conjugation by $\alpha$
induces a group isomorphism from $\Aut_\F(P)$ to $\Aut_\F(\alpha(P))$. As
$\F_0$ is weakly normal, we have $\alpha(P)\leq S_0$ and conjugation by
$\alpha$ takes $\Aut_{\F_0}(P)$ to $\Aut_{\F_0}(\alpha(P))$. So upon replacing
$P$ by $\alpha(P)$, we may assume without loss of generality that $P$ is fully
$\F$-normalized. Then $P$ is also fully $\F_0$-normalized by
\cite[Lemma~3.4(5)]{AschbacherNormal}. By the Sylow axiom, $\Aut_{S_0}(P)$ is a
Sylow $p$-subgroup of $\Aut_{\F_0}(P)$. So the Frattini argument yields
\[
\Aut_\F(P)=\Aut_{\F_0}(P)N_{\Aut_\F(P)}(\Aut_{S_0}(P))
\]
and thus 
\[
\Aut_\F(P)/\Aut_{\F_0}(P)\cong
N_{\Aut_\F(P)}(\Aut_{S_0}(P))/N_{\Aut_{\F_0}(P)}(\Aut_{S_0}(P)).
\]  
By the extension axiom for $\F$ and $\F_0$, each element of
$N_{\Aut_\F(P)}(\Aut_{S_0}(P))$ extends to an automorphism of $\Aut_\F(Q)$, and
each element of $N_{\Aut_{\F_0}(P)}(\Aut_{S_0}(P))$ extends to an automorphism
of $\Aut_{\F_0}(Q)$. Therefore, the map 
\[
\Phi\colon N_{\Aut_\F(Q)}(P)\rightarrow
N_{\Aut_\F(P)}(\Aut_{S_0}(P)),\;\phi\mapsto \phi|_P
\]
is an epimorphism which maps $N_{\Aut_{\F_0}(Q)}(P)$ onto
$N_{\Aut_{\F_0}(P)}(\Aut_{S_0}(P))$. Hence, 
\begin{eqnarray*}
\Aut_\F(P)/\Aut_{\F_0}(P)&\cong&
N_{\Aut_\F(P)}(\Aut_{S_0}(P))/N_{\Aut_{\F_0}(P)}(\Aut_{S_0}(P))\\ &\cong&
N_{\Aut_\F(Q)}(P)/N_{\Aut_{\F_0}(Q)}(P)\ker(\Phi).
\end{eqnarray*}
We have seen above that $N_{\Aut_\F(Q)}(P)/N_{\Aut_{\F_0}(Q)}(P)$ is a
$p$-group, and therefore also
\[
N_{\Aut_\F(Q)}(P)/N_{\Aut_{\F_0}(Q)}(P)\ker(\Phi)
\] 
is a $p$-group. Hence, $\Aut_\F(P)/\Aut_{\F_0}(P)$ is a $p$-group, and this
contradicts our assumption that $P$ is a counterexample.
\end{proof}

\subsection{Extensions to the bottom}
A \emph{central extension} of a fusion system $\F_0$ is a fusion system $\F$
such that $\F/Z \cong \F_0$ for some subgroup $Z \leq Z(\F)$. The central
extension is said to be \emph{perfect} if $\F = O^p(\F)$. Linckelmann has shown
that the Schur multiplier of a Benson-Solomon system is trivial.

\begin{theorem}[Linckelmann]
\label{T:schurmult}
Let $\F$ be a perfect central extension of a Benson-Solomon fusion system
$\F_0$. Then $\F = \F_0$. 
\end{theorem}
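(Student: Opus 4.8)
The plan is to show that the central kernel is trivial. Write $\hat{\F}$ for the given perfect central extension: a saturated fusion system over a finite $2$-group $\hat{S}$ with a subgroup $Z \leq Z(\hat{\F})$ and $\hat{\F}/Z \cong \F_0$, and suppose $Z \neq 1$. Since $Z$ is a nontrivial finite $2$-group it has a subgroup $Z_0$ of index $2$, and $\hat{\F}/Z_0$ is again a central extension of $\F_0$ with central kernel $Z/Z_0 \cong C_2$; it is still perfect, because $\hyp(\hat{\F}) = \hat{S}$ forces $\hyp(\hat{\F}/Z_0) = \hat{S}/Z_0$. So it suffices to obtain a contradiction when $Z \cong C_2$, which I assume henceforth.

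The main input is that the central $C_2$-extensions of $\F_0$, up to equivalence, are classified by $H^2(B\F_0; \mathbb{F}_2)$ --- the mod $2$ cohomology of the ($2$-completed) classifying space of $\F_0$ --- with the split extension $\F_0 \times \F_{C_2}(C_2)$ corresponding to the zero class. This is the fusion-theoretic counterpart of the classification of central group extensions, available through the theory of extensions of $p$-local finite groups; one passes freely between fusion systems and centric linking systems here because those of $\F_0$, and of any saturated extension of it, are unique by \cite[Lemma~3.2]{LeviOliver2002} (see also \cite{BrotoLeviOliver2003, AschbacherKessarOliver2011}). By \cite{LeviOliver2002}, $\F_0 = \F_{\Sol}(q)$ realizes the exotic $2$-compact group $DI(4)$, so $B\F_0 \simeq BDI(4)^{\wedge}_2$; and Dwyer and Wilkerson \cite{DwyerWilkerson1993} computed $H^*(BDI(4); \mathbb{F}_2)$ to be a polynomial algebra on generators in degrees $8$, $12$ and $14$. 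In particular $H^2(B\F_0; \mathbb{F}_2) = 0$, so $\hat{\F} \cong \F_0 \times \F_{C_2}(C_2)$.

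But $\F_0 \times \F_{C_2}(C_2)$ is not perfect: the fusion system of $C_2$ is nilpotent, so $\hyp(\F_0 \times \F_{C_2}(C_2)) = \hyp(\F_0) \times \hyp(\F_{C_2}(C_2)) = \hyp(\F_0) \times 1 \leq S \times 1$, a proper subgroup of $\hat{S} = S \times C_2$. This contradicts perfectness, and therefore $Z = 1$, i.e.\ $\hat{\F} = \F_0$.

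The step I expect to be the main obstacle is making the classification in the second paragraph precise and checking its hypotheses: that a central extension of the \emph{fusion} system $\F_0$ is governed by the same cohomology group as a central extension of the associated $p$-local finite group, and that $\hat{\F}$ is automatically saturated and admits a (unique) centric linking system so that the classifying-space machinery is available. One can instead reach the crucial vanishing by hand, at the level of a single subgroup, avoiding the $DI(4)$ identification. Set $E := \Omega_1(T_2) \cong C_2^3$; as $T_2 \cong (\mathbb{Z}/2^{k+2})^3$ is homocyclic, multiplication by $2^{k+1}$ gives an $\Aut(T_2)$-equivariant isomorphism $T_2/\Phi(T_2) \xrightarrow{\ \sim\ } E$, so by Lemma~\ref{L:Sselfnorm}(a) the $GL_3(2)$ appearing in $\Aut_{\L}(T_2)$ induces the full linear group on $E$ and $\Aut_{\F_0}(E) = GL(E)$. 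In $\hat{\F}$, the preimage $\hat{E}$ of $E$ is a central $C_2$-extension of $E$ whose class in $H^2(E; \mathbb{F}_2)$ is fixed by $\Aut_{\hat{\F}}(\hat{E})$; since $Z$ is central and $\hat{\F}/Z = \F_0$, the latter group acts on $H^2(E; \mathbb{F}_2)$ through its image $GL(E)$ in $\Aut(E)$, and $H^2(E; \mathbb{F}_2)$ has no $GL(E)$-fixed points, being a $GL(E)$-module with a two-step filtration both of whose factors are nontrivial and irreducible. Hence $\hat{E}$ splits; one would then propagate this splitting across $\hat{\F}$ --- using that mod $2$ cohomology of $\F_0$ is detected on its elementary abelian subgroups, all of which are subconjugate to $E$, together with a hyperfocal-subgroup argument as in Lemma~\ref{L:hyperfocal} --- to conclude that $\hat{\F}$ is the split extension and again contradict perfectness. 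This final propagation is the delicate point of the more elementary route.
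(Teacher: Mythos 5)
Your strategy is genuinely different from the paper's, and its skeleton (reduce to kernel $Z\cong C_2$; classify central extensions of the associated $2$-local finite group by $H^2$ of its classifying space; show this group vanishes; observe the split extension $\F_0\times\F_{C_2}(C_2)$ is not perfect) is coherent. But the decisive input is justified by a false identification: $B\F_{\Sol}(q)$ is \emph{not} $BDI(4)^{\wedge}_2$. In \cite{LeviOliver2002} it is only the $2$-completion of the union of the spaces $B\Sol(5^{2^k})$ over all $k$ that realizes $BDI(4)$; for a fixed $q$, $B\Sol(q)$ is a ``finite form'' of $BDI(4)$ in the same way that $B\Spin_7(q)$ is a finite form of $B\Spin(7)$, and its mod $2$ cohomology is not the rank $4$ Dickson algebra (it has, for instance, a nonzero class in degree $7$). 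So the Dwyer--Wilkerson computation \cite{DwyerWilkerson1993} --- which, incidentally, gives \emph{four} polynomial generators, in degrees $8,12,14,15$ --- does not by itself give $H^2(B\F_0;\mathbb{F}_2)=0$. That vanishing is true, but it requires the low-degree cohomology of $B\Sol(q)$ itself (e.g.\ via the known computation of $H^*(B\Sol(q);\mathbb{F}_2)$ as Dickson tensor exterior, or a stable-elements argument over $\F_0^c$), together with a precise appeal to the classification of central extensions of $2$-local finite groups (Broto--Castellana--Grodal--Levi--Oliver) rather than of fusion systems alone; you flag this last point but do not resolve it. As written, the crucial step rests on an incorrect claim, even though the statement it is meant to establish happens to be correct.

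Your fallback ``by hand'' route does not close this gap: the local step is fine ($H^2(E;\mathbb{F}_2)\cong \mathrm{Sym}^2$ of the dual natural module has no $GL_3(2)$-fixed points, so the preimage $\hat{E}$ of $E=\Omega_1(T_2)$ splits), but the passage from splitting over one elementary abelian subgroup to splitting of the whole extension of fusion systems --- the ``propagation'' you defer --- is exactly the substance of the theorem, and both ingredients you would need (detection of the relevant $H^2$ on elementary abelians, and conversion of a cohomological splitting into a splitting of $\hat{\F}$ contradicting perfectness) are nontrivial and unproved here. For comparison, the paper's own proof is purely algebraic and very short: it quotes Linckelmann's result on perfect central extensions \cite[Corollary~4.4]{Linckelmann2006b} together with the fact that $\Spin_7(q)$ has Schur multiplier of odd order \cite[Tables~6.1.2, 6.1.3]{GLS3}, thereby avoiding any computation of $H^*(B\Sol(q);\mathbb{F}_2)$. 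If you want to keep your topological route, the concrete items to supply are: (i) a correct reference or proof that $H^2(|\L_0|^{\wedge}_2;\mathbb{F}_2)=0$ for the finite system $\F_{\Sol}(q)$, and (ii) the statement that central extensions of the $2$-local finite group by $C_2$ are classified by this group, with the zero class giving the product.
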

\begin{proof}
This follows from Corollary~4.4 of \cite{Linckelmann2006b} together with the
fact that $\Spin_7(q)$ has Schur multiplier of odd order when $q$ is odd
\cite[Tables~6.1.2, 6.1.3]{GLS3}. 
\end{proof}

\subsection{Extensions to the top}\label{SS:exttop}

The next theorem describes the possible extensions $(S,\F)$ of a Benson-Solomon
system $(S_0,\F_0)$.  The particular hypotheses are best stated in terms of the
generalized Fitting subsystem of Aschbacher \cite{AschbacherGeneralized}, but
they are equivalent to requiring that $\F_0 \norm \F$ and $C_S(\F_0) \leq S_0$,
where $C_S(\F_0)$ is the centralizer constructed in \cite[\S
6]{AschbacherGeneralized}. This latter formulation is sometimes expressed by
saying that $\F_0$ is \emph{centric normal} in $\F$. 

\begin{theorem}
\label{T:solext}
Let $l$ be any nonnegative integer, and let $\F_0 = \F_{\Sol}(5^{2^l})$ be a
Benson-Solomon system over the $2$-group $S_0$.
\begin{enumerate}
\item[\textup{(a)}] If $\F$ is a saturated fusion system over $S$ such that
$F^*(\F) = \F_0$, then $\F_0 = O^2(\F)$, $S$ splits over $S_0$, and the map
$S/S_0 \to \Out(\F_0)$ induced by conjugation is injective.
\item[\textup{(b)}] Conversely, given a subgroup of $A \leq \Out(\F_0) \cong
C_{2^l}$, there is a saturated fusion system $\F$ over a $2$-group $S$ such
that $F^*(\F) = \F_0$ and the map $S/S_0 \to \Out(\F_0)$ induced by conjugation
on $S_0$ has image $A$.  Moreover, the pair $(S,\F)$ with these properties is
uniquely determined up to isomorphism.

If $\L_0$ is a centric linking system associated to $\F_0$, then
$\Aut_{\L_0}(S_0)=S_0$, and the $p$-group $S$ can be chosen to be the preimage
of $A$ in $\Aut(\L_0)$ under the quotient map from $\Aut(\L_0)$ to
$\Out(\L_0)\cong \Out(\F_0)$. 
\end{enumerate}
\end{theorem}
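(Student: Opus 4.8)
The plan is to leverage Theorem~\ref{T:outsol} together with the general extension theory for fusion systems with a centric normal subsystem, in the spirit of \cite{AschbacherGeneralized, Oliver2010}. For part (a), I would start from the hypothesis $F^*(\F) = \F_0$, which unpacks as $\F_0 \norm \F$ with $C_S(\F_0) \leq S_0$. The key structural input is that $\Aut(\F)$ and $\Aut(\L_0)$ are $2$-groups with $\Out(\F_0) \cong C_{2^k}$ a cyclic $2$-group (Theorem~\ref{T:outsol}); in particular there are no automorphisms of odd order to worry about, so $\Aut_\F(S_0)/\Inn(S_0)$ embeds in the $2$-group $\Out(\F_0)$. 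First I would argue $\F_0 = O^2(\F)$: since $\Out(\F_0)$ is a $2$-group, the outer automorphisms induced on $\F_0$ by $\F$ are all $2$-automorphisms, so $O^2(\Aut_\F(S_0)) \leq \Aut_{\F_0}(S_0)$; combined with $\hyp(\F) \leq S_0$ (which follows because $\F_0$ carries the hyperfocal subgroup when $\F_0$ is centric normal of $2$-power index — here one checks $\F/\F_0$ is a $2$-group since $\Out(\F_0)$ is), Lemma~\ref{L:hyperfocal} gives that $\F_0$ has $2$-power index in $\F$, and then by uniqueness $\F_0 = O^2(\F)$. The injectivity of $S/S_0 \to \Out(\F_0)$ is exactly the statement $C_S(\F_0) \leq S_0$ reinterpreted: the kernel of that map consists of elements of $S$ inducing inner automorphisms of $\F_0$, hence (after adjusting by $S_0$) of elements centralizing $\F_0$, which lie in $S_0$. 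For the splitting of $S$ over $S_0$: this is where Theorem~\ref{T:outsol} does the real work — $S$ maps onto a subgroup $A \leq \Out(\F_0)$, and I would realize the extension inside $\Aut(\L_0)$, which by Theorem~\ref{T:outsol} is the \emph{split} extension $S_0 \rtimes \gen{\psi}$; the preimage of $A$ there is then a complement-containing $2$-group, giving the splitting of $S$.

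For part (b), the construction is: given $A \leq \Out(\F_0) \cong \Out(\L_0)$, let $S := $ the preimage of $A$ in $\Aut(\L_0)$ under $\Aut(\L_0) \to \Out(\L_0)$, which is a $2$-group by Theorem~\ref{T:outsol} and contains $\Aut_{\L_0}(S_0) \cong S_0$ as a normal subgroup with $S/S_0 \cong A$. One then needs to produce a saturated fusion system $\F$ over this $S$ with $\F_0 \norm \F$ inducing $A$ on $\F_0$. The natural approach is via Aschbacher's or Oliver's machinery: an action of $S/S_0$ on $(\F_0, \L_0)$ by automorphisms — which we have, tautologically, since $S/S_0 = A$ sits inside $\Out(\L_0)$ and we can lift through the split extension $\Aut(\L_0) = S_0 \rtimes \gen{\psi}$ — determines an extension fusion system, and saturation follows from the general extension theorem (e.g. \cite[\S8]{AschbacherGeneralized} or the linking-system extension results of \cite{Oliver2010, BrotoMollerOliver2016}) because the "gluing data" is consistent: the relevant obstruction groups are controlled, and here in fact the extension of the \emph{linking} system already exists inside $\Aut(\L_0)$. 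The statement $\Aut_{\L_0}(S_0) = S_0$ is Lemma~\ref{L:Sselfnorm}(b) applied to $\L_0$ (recall that lemma shows $\Aut_{\L}(S) = \delta_S(S)$, and the same argument gives it for the Benson–Solomon linking system directly). That $F^*(\F) = \F_0$ in the resulting system is verified by checking $C_S(\F_0) \leq S_0$, which holds because $S/S_0$ embeds in $\Out(\F_0)$ by construction.

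For uniqueness up to isomorphism: suppose $\F$ and $\F'$ are two saturated systems over $S$ and $S'$ respectively, both with $F^*(\cdot) = \F_0$ and both inducing the same subgroup $A \leq \Out(\F_0)$. By part (a), $S \cong S' \cong $ the preimage of $A$ in $\Aut(\L_0)$ (this uses the splitting and the fact that the extension is determined by the homomorphism $A \hookrightarrow \Out(\F_0)$, there being no further cohomological freedom because $Z(\F_0) = 1$ — indeed $Z(\F_0) \leq Z(\F_{\Spin}(q))$ and Theorem~\ref{T:schurmult}/the structure of $S$ forces this, or more directly $Z(S_0) = Z(S) \cap S_0$ has order $2$ but is not central in $\F_0$ as noted before Lemma~\ref{L:Sselfnorm}). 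Then one invokes a rigidity/uniqueness statement for extensions of $\F_0$: two extensions inducing the same outer action on $\F_0$ are isomorphic, which again reduces to the uniqueness of the centric linking system $\L_0$ (exactness of the last column of \eqref{E:diagram} / \cite[Lemma~3.2]{LeviOliver2002}) together with the fact that $\Aut(\L_0)$ — hence all the gluing data — is rigidly determined. I expect the main obstacle to be the \textbf{saturation} of the constructed system $\F$ in part (b): while the outer action exists and even lifts to $\L_0$, one must still cite or verify the appropriate extension theorem guaranteeing that the resulting fusion system over $S$ is saturated and has $\F_0$ as a normal subsystem; the cleanest route is probably to build the extended centric linking system first (as the obvious subcategory of $\L_0$-morphisms together with the $S$-conjugations) and deduce saturation of its fusion system from \cite[Theorem~B]{Oliver2010} or the analogous result in \cite{AschbacherGeneralized}. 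The remaining bookkeeping — that $\hyp(\F) \leq S_0$, that $O^2(\F) = \F_0$, that the conjugation map has image exactly $A$ — is then routine given Theorem~\ref{T:outsol} and Lemma~\ref{L:hyperfocal}.
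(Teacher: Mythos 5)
Your overall strategy coincides with the paper's (Theorem~\ref{T:outsol} plus Lemma~\ref{L:hyperfocal} plus Oliver's extension machinery and the uniqueness of the centric linking system), but the pivotal step is missing. In (a) you claim that injectivity of $S/S_0\to\Out(\F_0)$ is ``exactly $C_S(\F_0)\leq S_0$ reinterpreted'' and that you can ``realize the extension inside $\Aut(\L_0)$.'' Since $\Aut_{\F_0}(S_0)=\Inn(S_0)$, the kernel of that map is $S_0C_S(S_0)/S_0$, so what must be shown is $C_S(S_0)\leq S_0$; but Aschbacher's centralizer satisfies $C_S(\F_0)\leq C_S(S_0)$ and is in general strictly smaller, and an element of $S$ centralizing the group $S_0$ need not centralize the subsystem $\F_0$. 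So $F^*(\F)=\F_0$ does not formally give the injectivity, and the embedding of $S$ into $\Aut(\L_0)$ is precisely the hard point: the paper first forms $\F_1=\F_0S$, produces a normal pair of linking systems $\L_0\norm\L_1$ via \cite[Proposition~1.31]{AOV2012}, shows $C_S(\F_0)=Z(\F_0)=1$ using \cite[Theorem~6]{AschbacherGeneralized} and \cite[Lemma~1.13]{Lynd2015}, and only then gets injectivity of the conjugation map $\Aut_{\L_1}(S_0)\to\Aut(\L_0)$ from Semeraro's theorem; the splitting of $S$ over $S_0$, $C_S(S_0)\leq S_0$, and cyclicity of $S/S_0$ then all come from $\Aut(\L_0)=S_0\rtimes\gen{\psi}$ and $C_{\Aut(\L_0)}(S_0)\leq S_0$ (Theorem~\ref{T:outsol}). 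Your sketch assumes this embedding rather than proving it. Relatedly, your argument for $O^2(\F)=\F_0$ is circular as written: you justify $\hyp(\F)\leq S_0$ by saying $\F_0$ is centric normal ``of $2$-power index,'' which is the statement being proved; the genuine argument shows $[P,\alpha]\leq S_0$ for odd-order $\alpha\in\Aut_\F(P)$ because $P/(P\cap S_0)$ embeds in the \emph{cyclic} $2$-group $S/S_0$ (odd automorphisms of a noncyclic $2$-group need not be trivial), so the injectivity must come first and your ordering would have to be reversed.

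In (b) your construction matches the paper's in outline (apply \cite[Theorem~9]{Oliver2010} to the preimage $S\leq\Aut(\L_0)$ of $A$), but two ingredients are absent. First, Oliver's theorem only yields $\F_0$ \emph{weakly} normal in $\F$; the paper invokes simplicity of $\F_0$ and Craven's theorem \cite[Theorem~A]{Craven2011} to upgrade to normality, and then \cite[(9.1)(2), (9.6)]{AschbacherGeneralized} to conclude $F^*(\F)=\F_0$. Second, the uniqueness is not an off-the-shelf rigidity statement that ``two extensions inducing the same outer action are isomorphic'': for an arbitrary $(S',\F')$ one must again produce a normal pair $\L_0'\norm\L'$, identify $\L_0'$ with $\L_0$ by uniqueness of centric linking systems, embed $S'=\Aut_{\L'}(S_0)$ into $\Aut(\L_0)$ by the same Semeraro argument as in (a), and then use $C_{\Aut(\L_0)}(S_0)\leq S_0$ to check that the conjugation action of $S'$ on $\L_0$ agrees with the canonical inclusion, so that the uniqueness clause of Oliver's theorem applies. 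Your appeal to $Z(\F_0)=1$ and the rigidity of $\Aut(\L_0)$ gestures at the right ingredients, but without the linking-system comparison just described the uniqueness (and indeed the splitting and injectivity in (a)) is not established.
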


\begin{proof}
Let $\F$ be a saturated fusion system over $S$ such that $F^*(\F) = \F_0$.  Set
$\F_1 = \F_0S$, the internal extension of $\F_0$ by $S$, as in \cite{Henke2013}
or \cite[\S 8]{AschbacherGeneralized}. According to
\cite[Proposition~1.31]{AOV2012}, there is a normal pair of linking systems
$\L_0 \norm \L_1$, associated to the normal pair $\F_0 \norm \F_1$.
Furthermore, $\L_0 \unlhd \L_1$ can be chosen such that $\L_0$ is a centric
linking system. There is a natural map from $\Aut_{\L_1}(S_0)$ to $\Aut(\L_0)$
which sends a morphism $\phi\in\Aut_{\L_1}(S_0)$ to conjugation by $\phi$. (So
the restriction of this map to $\Aut_{\L_0}(S_0)$ is the conjugation map
described in \S\S\ref{SS:linkaut}.)

The centralizer $C_S(\F_0)$ depends a priori on the fusion system $\F$, but it
is shown in  \cite[Lemma~1.13]{Lynd2015} that it does not actually matter
whether we form $C_S(\F_0)$ inside of $\F$ or inside of $\F_1$. Moreover, since
$F^*(\F) = \F_0$, it follows from \cite[Theorem~6]{AschbacherGeneralized} that
$C_S(\F_0)=Z(\F_0) = 1$. Thus, by a result of Semeraro
\cite[Theorem~A]{Semeraro2015}, the conjugation map
$\Aut_{\L_1}(S_0)\xrightarrow{\operatorname{conj}} \Aut(\L_0)$ is injective. By
Lemma~\ref{L:Sselfnorm}, we have $S_0=\Aut_{\L_0}(S_0)$ via the inclusion
functor $\delta_1$ for $\L_1$. By Theorem~\ref{T:outsol}, $\Aut(\L_0)$ is a
$2$-group which splits over $S_0$. Moreover, by the same theorem, we have that
$C_{\Aut(\L_0)}(S_0)\leq S_0$ and $\Out(\L_0)\cong\Out(\F_0)$ is cyclic. Since
$(\delta_1)_{S_0}(S)\cong S$ is a Sylow $2$-subgroup of $\Aut_\L(S_0)$ by
\cite[Proposition~4(d)]{Oliver2010}, we can conclude that 
\[
S_0 = \Aut_{\L_0}(S_0) \norm \Aut_{\L_1}(S_0) = S,
\]
via the inclusion functor $\delta_1$ for $\L_1$. Moreover, it follows that $S$
splits over $S_0$, and $C_S(S_0)\leq S_0$. The latter property means that the
map
\[
S/S_0 \rightarrow \Out(\F_0)
\]
is injective. In particular, $S/S_0$ is cyclic as $\Out(\F_0)$ is cyclic. 

Next, we show that $O^2(\F) = \F_0$. Fix a subgroup $P \leq S$, and let $\alpha
\in \Aut_\F(P)$ be an automorphism of odd order. Then $\alpha$ induces an
odd-order automorphism of the cyclic $2$-group $P/(P \cap S_0) \cong
PS_0/S_0\leq S/S_0$.  This automorphism must be trivial, and so $[P, \alpha]
\leq S_0$.  Hence, $[P, O^2(\Aut_\F(P))] \leq S_0$ for all $P \leq S$. Since
$\hyp(\F_0) = S_0$, we have $\hyp(\F) = S_0$. Note that $\Aut_\F(S_0)$ is a
$2$-group as $\Aut_\F(S_0)\leq \Aut(\F_0)$ and $\Aut(\F_0)$ is a $2$-group by
Theorem~\ref{T:outsol}. Therefore $O^2(\F) = \F_0$ by Lemma~\ref{L:hyperfocal}.
We conclude that $\F_1 = \F$ by the uniqueness statement in
\cite[Theorem~1]{Henke2013}. This completes the proof of (a). Moreover, we have
seen that the following property holds for any normal pair $\L_0\unlhd\L$
attached to $\F_0\unlhd\F$:
\begin{eqnarray}\label{E:Conj}
S_0 = \Aut_{\L_0}(S_0) \norm \Aut_{\L_1}(S_0) = S \mbox{\qquad and \qquad
}S\xrightarrow{\;\text{conj}\;}\Aut(\L_0)\mbox{ is injective.}
\end{eqnarray}
Finally, we prove (b).  Fix a centric linking system $\L_0$ associated to
$\F_0$ with inclusion functor $\delta_0$. Let $S \leq \Aut(\L_0)$ be the
preimage of $A$ under the quotient map to $\Out(\F_0)$. We will identify $S_0$
with $\delta_0(S_0)$ so that $S_0=\Aut_{\L_0}(S_0)$ by Lemma~\ref{L:Sselfnorm}.
Write $\iota\colon S_0\rightarrow \Aut(\L_0),s\mapsto c_s$ for map sending
$s\in S_0$ to the automorphism of $\L_0$ induced by conjugation with $s$ in
$\L_0$. Then $\iota(S_0)$ is normal in $S$. Let $\chi\colon S\rightarrow
\Aut(S_0)$ be the map defined by $\alpha\mapsto \iota^{-1}\circ
c_\alpha|_{\iota(S_0)}\circ \iota$; i.e. $\chi$ corresponds to conjugation in
$S$ if we identify $S_0$ with $\iota(S_0)$. We argue next that the following
diagram commutes:
\begin{eqnarray}\label{D:SquareTriangle}
\vcenter{
\xymatrix{
S_0 \ar[r]^{\iota\;\;\;}\ar[d]_{\iota} & \Aut(\L_0)\ar[d]^{\alpha\mapsto \alpha_{S_0}}\\
S \ar[ur]^{\operatorname{incl}}\ar[r]^{\chi\;\;\;} & \Aut(S_0)
}}
\end{eqnarray}
The upper triangle clearly commutes. Observe that  $\alpha\circ \iota(s)\circ
\alpha^{-1}=\alpha\circ c_s\circ
\alpha^{-1}=c_{\alpha_{S_0}(s)}=\iota(\alpha_{S_0}(s))$ for every $s\in S_0$
and $\alpha\in S$. Hence, for every $\alpha\in S$ and $s\in S_0$, we have
$(\iota^{-1}\circ c_\alpha|_{\iota(S_0)}\circ \iota)(s)=\iota^{-1}(\alpha\circ
\iota(s)\circ \alpha^{-1})=\alpha_{S_0}(s)$ and so the lower triangle commutes. 

We will now identify $S_0$ with its image in $S$ under $\iota$, so that $\iota$
becomes the inclusion map and $\chi$ corresponds to the map $S\rightarrow
\Aut(S_0)$ induced by conjugation in $S$. As the above diagram commutes, it
follows then that the diagram in \cite[Theorem~9]{Oliver2010} commutes when we
take $\Gamma = S$ and $\tau\colon S\rightarrow \Aut(\L_0)$ to be the inclusion.
Thus, by that theorem, there is a saturated fusion system $\F$ over $S$ in
which $\F_0$ is weakly normal, and there is a corresponding normal pair of
linking systems $\L_0 \norm \L$ (in the sense of \cite[\S 1.5]{AOV2012}) such
that $S = \Aut_{\L}(S_0)$ has the given action on $\L_0$ (i.e. the automorphism
of $\L_0$ induced by conjugation with $s\in S$ in $\L$ equals the automorphism
$s$ of $\L_0$).  By the same theorem, the pair $(\F,\L)$ is unique up to
isomorphism of fusion systems and linking systems with these properties. Since
$\F_0$ is simple \cite{Linckelmann2006}, $\F_0$ is in fact normal in $\F$ by a
result of Craven \cite[Theorem~A]{Craven2011}.  Thus, since $C_S(\F_0) \leq
C_S(S_0) \leq S_0$, it is a consequence of \cite[(9.1)(2),
(9.6)]{AschbacherGeneralized} that $F^*(\F) = \F_0$.

So it remains only to prove that $(S,\F)$ is uniquely determined up to an
isomorphism of fusion systems. Let $\F'$ be a saturated fusion system over a
$2$-group $S'$ such that $F^*(\F')=\F_0$, and such that the map
$S'/S_0\rightarrow \Out(\F_0)$ induced by conjugation has image $A$. Then by
(a), $\F_0=O^2(\F')$. So by  \cite[Proposition~1.31]{AOV2012}, there is a
normal pair of linking systems $\L_0' \norm \L'$ associated to the normal 
pair $\F_0 \norm \F'$. Moreover, we can choose $\L_0'$ to be a centric linking
system. Since a centric linking system attached to $\F_0$ is unique, there is
an isomorphism $\theta\colon \L_0'\rightarrow \L_0$ of linking systems. We may
assume that the set of morphisms which lie in $\L'$ but not in $\L_0'$ is
disjoint from the set of morphisms in $\L_0$. Then we can construct a new
linking system from $\L'$ by keeping every morphism of $\L'$ which is not in
$\L_0'$ and replacing every morphism $\psi$ in $\L_0'$ by $\theta(\psi)$, and
then carrying over the structure of $\L'$ in the natural way. Thereby we may
assume $\L_0'=\L_0$. So we are given a normal pair $\L_0\unlhd \L'$ attached to
$\F_0\unlhd \F'$. By (\ref{E:Conj}) applied with $\L'$ and $\F'$ in place of
$\F$ and $\L$, we have $S_0=\Aut_{\L_0}(S_0)\unlhd \Aut_{\L'}(S_0)=S'$ via the
inclusion functor $\delta'$ of $\L'$. Let    
\[
\tau\colon S'\rightarrow \Aut(\L_0)
\]
be the map taking $s\in S'$ to the automorphism of $\L_0$ induced by
conjugation with $s$ in $\L'$. Again using (\ref{E:Conj}), we see that $\tau$
is injective. Note also that $\tau$ restricts to the identity on $S_0$ if we
identify $S_0$ with $\iota(S_0)$ as above. Recall that the map
$S'/S_0\rightarrow \Out(\F_0)$ induced by conjugation has image $A$. So
Theorem~\ref{T:outsol} implies $\tau(S')=S$, i.e. we can regard $\tau$ as an
isomorphism $\tau\colon S'\rightarrow S$. So replacing $(S',\F')$ by
$(S,{}^\tau\!\F')$ and then choosing $\L_0\unlhd\L'$ as before, we may assume
$S=S'$. So $\F'$ is a fusion system over $S$ with $\F_0\unlhd \F'$, and
$\L_0\unlhd \L'$ is a normal pair of linking systems associated to $\F_0\unlhd
\F'$ such that $\Aut_{\L'}(S_0)=S$ via $\delta'$.  Let $s\in S$. Recall that
$\tau(s)$ is the automorphism of $\L_0$ induced by conjugation with $s$ in
$\L'$. Observe that the automorphism of $S_0=\Aut_{\L_0}(S_0)$ induced by
$\tau(s)$ equals just the automorphism of $S_0$ induced by conjugation with $s$
in $S$. Similarly, the automorphism $s$ of $\L_0$ equals the automorphism of
$\L_0$ given by conjugation with $s$ in $\L$, and so induces on
$S_0=\Aut_{\L_0}(S_0)$ just the automorphism given by conjugation with $s$ in
$S$. Theorem~\ref{T:outsol} gives $C_{\Aut(\L_0)}(S_0)\leq S_0$ and this
implies that any two automorphisms of $\L_0$, which induce the same
automorphism on $S_0$, are equal. Hence, $\tau(s)=s$ for any $s\in S$. In other
words, $S=\Aut_{\L'}(S_0)$ induces by conjugation in $\L'$ the canonical action
of $S$ on $\L_0$. The uniqueness of the pair $(\F,\L)$ implies now $\F'\cong
\F$ and $\L'\cong \L$. This shows that $(S,\F)$ is uniquely determined up to
isomorphism. 
\end{proof}

\bibliographystyle{amsalpha}{}
\bibliography{/home/cpsmth/s05jl6/work/math/research/mybib}

\end{document}